\theoremstyle{plain} 
\newtheorem{theo}{Theorem}[section]
\newtheorem{prop}[theo]{Proposition}
\newtheorem{lem}[theo]{Lemma}
\newtheorem{coro}[theo]{Corollary}
\theoremstyle{definition}
\newtheorem{defi}[theo]{Definition}
\newtheorem{rk}[theo]{Remark}
\newtheorem{exa}[theo]{Example}
\numberwithin{equation}{section}
\newcommand{\from}{\colon}
\newcommand{\N}{{\mathbb N}}
\newcommand{\Z}{{\mathbb Z}}
\newcommand{\R}{{\mathbb R}}
\newcommand{\calE}{{\cal{E}}}
\newcommand{\ccalE}{\check{\cal{E}}}
\newcommand{\calD}{{\cal{D}}}
\newcommand{\calH}{{\cal{H}}}
\newcommand{\calEb}{\calE^{(\beta)}}
\newcommand{\calEt}{\calE^{(t)}}
\newcommand{\form}{a}
\newcommand{\ccalD}{\check{\cal{D}}}
\DeclareMathOperator{\ran}{ran}
\newcommand{\dom}{\mathrm{dom}}
\newcommand{\trans}{\mathrm{trans}}
\newcommand{\rec}{\mathrm{rec}}
\newcommand{\cons}{\mathrm{cons}}
\newcommand{\diss}{\mathrm{diss}}
\begin{document}
\title{Decomposition formulae for Dirichlet forms and their corollaries}
\author{\normalsize
Ali BenAmor\footnote{corresponding author}\footnote{High institute for transport and logistics. University of Sousse, Tunisia. E-mail: ali.benamor@ipeit.rnu.tn},
Rafed Moussa \footnote{Department of Mathematics, High school of sciences and technology of Hammam Sousse. University of Sousse, Tunisia. E-mail: rafed.moussa@gmail.com}
}

\date{ }

\maketitle

\begin{abstract}
We provide decompositions of Dirichlet forms into recurrent and transient parts as well as into conservative and dissipative parts, in the framework of Hausdorff state spaces. Combining both formulae we write every Dirichlet form as the sum of a recurrent, dissipative and transient conservative Dirichlet forms. Besides, we prove that Mosco convergence preserves invariant sets and that a Dirichlet form shares the same invariants sets with its approximating Dirichlet forms $\calEt$  and $\calEb$. Finally we show the  equivalence between conservativeness (resp. dissipativity) of a Dirichlet form and the conservativeness (reps. dissipativity) of $\calEt$  and $\calEb$.  The elaborated results are enlightened by some examples.
\end{abstract}

\textbf{MSC 2010:} 47A07, 46C05, 46C07, 47B25, 46E30.

\textbf{Keywords:} Dirichlet forms, invariant sets, conservative, dissipative, Mosco convergence

\section{Introduction}

Among interesting global properties for Dirichlet forms, we list recurrence, transience, conservativeness and dissipativity. Unfortunately, a Dirichlet form $\calE$ may fail to have any of the mentioned properties. To overcome this problem we shall establish  decompositions formulae for $\calE$ into the sum of a recurrent and a transient Dirichlet form as well as into the sum of a conservative and a dissipative Dirichlet forms. Then combining both formulae we shall write any Dirichlet form as the sum of a recurrent, dissipative and transient-conservative forms.\\
The motivation rests on the existence of Dirichlet forms which are neither recurrent nor transient or neither conservative nor dissipative. Moreover there are Dirichlet forms which are simultaneously transient and conservative. Hence the second decomposition is finer than the first one. Besides, the mentioned decompositions lead to ergodic decompositions of a given Dirichlet form. Also known facts about recurrence, transience, conservativeness  or dissipativity can be used to investigate properties of the considered Dirichlet forms, by means of investigations of each part separately.\\
Let us quote that decomposition into recurrent and transient parts already exist in the literature, mainly in \cite{Meyer-Ergo-Lim,Fukushima-Ergodic,Dynkin-Excessive,Fitzsimmons-Maisonneuve} (in implicit form) and explicitly in \cite[Theorem 1.3]{Kuwae},  for non-symmetric quasi regular (semi)-Dirichlet forms on locally compact metric state spaces. The most general and purely analytic framework can be found in \cite{Meyer-Ergo-Lim}.\\
At this stage we draw the attention of the reader to the following connotations. In the above-mentioned references, the authors use the terminology 'conservative' for what we call and is in fact 'recurrent' and 'dissipative' for what we call and is in fact 'transient'.\\
In this respect our major contributions are, among others, first to establish the decomposition of a symmetric Dirichlet forms  into conservative and dissipative parts. Second, provide  decomposition of a symmetric Dirichlet form into recurrent plus dissipative plus transient-conservative parts, in the general framework of Hausdorff topological spaces. This leads, in particular to the fact that every dissipative form is  transient, whereas the converse is not true in general. Furthermore we shall precise under which conditions the considered parts of a given Dirichlet form coincide with each other. As a byproduct one obtains criteria for conservativeness and dissipation. Pushing our analysis forward, we shall exploit  the established decompositions to study relationship between conservativeness, respectively,  dissipativity  of a Dirichlet form and its Deny--Yoshida or time dependent approximating forms.\\
As invariant sets emerge naturally in our framework, we shall also investigate some of their properties. As a new result, we shall prove the remarkable  fact  that Mosco convergence preserves invariant sets. This shows, in particular, that the limit Dirichlet form has much more invariant sets than its approximating sequence. Moreover we demonstrate that parts of Dirichlet forms on invariant sets inherit Mosco convergence.\\
Finally combining all these results, will lead to the fact that the conservative part, respectively the dissipative part, of the Deny--Yoshida or of the time dependent approximation of $\calE$ converges to the corresponding part of $\calE$.\\
The paper is organized  as follows: In section 2, we present the framework and recall some known results. As invariant sets play an important role for our method we will revisit them in the third section. Section 4 is devoted to establish the decompositions formulae and their consequences. Some illustrating examples are given in section 5
\section{The framework and preparing results}

Let $X$ be a Hausdorff topological space and $m$ be a positive  $\sigma$-finite Radon measure on some $\sigma$-algebra $\mathcal{A}$ of subsets of $X$, with full support $X$. For every $p\in [1,\infty]$, the symbol $L^p$ stands for the usual Lebesgue space $L^p(X,m)$. We shall denote by $(\cdot,\cdot)$ the scalar product on $L^2$.\\
Unless otherwise stated, all equalities and inequalities considered in the paper has to be understood  in the sense $m$-a.e.\\
A Dirichlet form  $\calE$ with domain $\calD\subset L^2$ is a densely defined closed quadratic form such that
\begin{align*}
u\in\calD\Rightarrow\,u_{0,1}:=\min(\max(u,0),1)\in\calD\ \text{ and } \calE[u_{0,1}]\leq\calE[u].
\end{align*}
We draw the attention of the reader that we shall consider only symmetric Dirichlet forms.\\
For every $\beta>0$ we set $\calEb$ the Deny--Yosida approximation of $\calE$:
\begin{eqnarray}
\dom(\calEb)=L^2,\ \calEb[u]:= \beta(u-\beta K_\beta u,u),\ \forall\,u\in L^2.
\end{eqnarray}
Then $(\calEb)_{\beta>0}$ are bounded monotone increasing Dirichlet forms. By the spectral theorem we get the following
\begin{eqnarray}
\calD=\{u\in L^2:\ \lim_{\beta\uparrow\infty}\calEb[u]<\infty\},\ \calE[u]=\lim_{\beta\uparrow\infty}\calEb[u],\ \forall\,u\in\calD.
\end{eqnarray}
It is well known that $\calEb$ converges in the strong resolvent sense, and hence in the sense of Mosco to $\calE$ as $\beta\uparrow\infty$.\\
Let $(T_t)_{t>0}$ be the semigroup family related to $\calE$. It is well known that $(T_t)_{t>0}$ is a strongly continuous family of Markovian selfadjoint operators.
For every $t>0$ we set $\calEt$ the 'time dependent' approximation of $\calE$:
\begin{eqnarray}
\dom(\calEt)=L^2,\ \calEt[u] = \frac{1}{t}(u- T_t u,u),\ \forall\,u\in L^2.
\end{eqnarray}
Then $\calEt$ are bounded monotone decreasing Dirichlet forms. By the spectral theorem, once again, we get the following
\begin{eqnarray}
\calD=\{u\in L^2:\ \lim_{t\downarrow 0}\calEt[u]<\infty\},\ \calE[u]=\lim_{t\downarrow 0}\calEt[u]=\sup_{t>0}\calEt[u],\ \forall\,u\in\calD.
\label{monotoneLim}
\end{eqnarray}
It also holds that $\calE$ is the Mosco limit of $\calEt$ as $t\downarrow 0$.\\
It is also well known that $T_t$ extends to a Markovian semigroup of contractions on $L^\infty$ which we still denote by $T_t,\ t>0$. This extension goes as follows (see \cite[p.6]{Chen-Fukushima}): By the $\sigma$-finiteness of $(X,m)$ there is an increasing  sequence $(\varphi_k)\subset L^1\cap L^\infty$ such that $0<\varphi_k\leq 1$ and $\varphi_k\uparrow 1$. Let $0\leq u\in L^\infty$. Then $u\varphi_k\in L^2$ and $u\varphi_k\uparrow u$. Using monotonicity property for $T_t$,  we define
\[
T_t u:= \lim_{k\to\infty} T_t(u\varphi_k).
\]
For $u\in L^\infty$ of indefinite sign we define $T_tu=T_tu^+ - T_tu^-$.\\
We use the same procedure to extend $K_\beta$ to $L^\infty$.\\
The following is well known and can be found in \cite[Lemma 1.1.6-1.1.7, pp.7-9]{Chen-Fukushima}.
\begin{lem}{(Representation formulae for $\calEb$ and $\calEt$)}.
For every  $u\in L^2\cap L^\infty,\ t,\beta>0$ set
\[
\sigma_t(u)=\frac{1}{t} (T_t u^2 -2uT_tu + u^2T_t1)\ \text{\rm and}\ \kappa_\beta(u)=\beta(K_\beta u^2 -2uK_\beta u + u^2 K_\beta 1).
\]
Then $\sigma_t(u)\geq 0,\ \kappa_\beta(u)\geq 0$ and
\begin{align}
\calEt[u]=\frac{1}{2}\int_X \sigma_t(u)\,dm + \frac{1}{t}\int_X(1-T_t 1)u^2(x)\,dm.
\label{AlternativeRep}
\end{align}
\begin{align}
\calEb[u]=\frac{\beta}{2}\int_X \kappa_\beta(u)\,dm + \beta\int_X(1-\beta K_\beta 1)u^2(x)\,dm.
\end{align}
\label{KernelRep}
\end{lem}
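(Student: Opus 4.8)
The plan is to derive both identities from the corresponding operator-level formulae for the quadratic forms $\calEt[u]=\frac1t(u-T_tu,u)$ and $\calEb[u]=\beta(u-\beta K_\beta u,u)$ by a ``polarization-and-completing-the-square'' manipulation, carried out pointwise via the (sub-)Markovian kernels of $T_t$ and $K_\beta$. First I would fix $u\in L^2\cap L^\infty$ and $t>0$ and write, using self-adjointness of $T_t$ on $L^2$, $\frac1t(u-T_tu,u)=\frac1t\int_X u^2\,dm-\frac1t\int_X u\,T_tu\,dm$. The key algebraic trick is the pointwise identity
\begin{align*}
\sigma_t(u)=\frac1t\big(T_t u^2-2uT_tu+u^2T_t1\big),
\end{align*}
which, integrated against $m$ and using $\int_X T_t(u^2)\,dm=\int_X u^2\,T_t1\,dm$ (again self-adjointness, applied to $u^2\in L^1\cap L^\infty$ and the constant $1\in L^\infty$, justified by the monotone extension of $T_t$ to $L^\infty$ recalled just above), gives $\frac12\int_X\sigma_t(u)\,dm=\frac1t\int_X u^2\,T_t1\,dm-\frac1t\int_X u\,T_tu\,dm$. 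Subtracting this from the expression for $\calEt[u]$ leaves exactly $\frac1t\int_X(1-T_t1)u^2\,dm$, which is \eqref{AlternativeRep}. The argument for $\calEb$ is verbatim the same with $T_t\leftrightarrow K_\beta$, $\frac1t\leftrightarrow\beta$ and $\sigma_t\leftrightarrow\kappa_\beta$, using that $K_\beta$ is likewise self-adjoint on $L^2$ and extends monotonically to $L^\infty$.

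Next I would establish the nonnegativity $\sigma_t(u)\geq0$ and $\kappa_\beta(u)\geq0$ pointwise $m$-a.e. Because $\frac1t T_t$ (resp. $\beta K_\beta$) is sub-Markovian, for a.e.\ $x$ it is represented by a sub-probability kernel $p(x,\cdot)$; then $T_t u^2(x)-2u(x)T_tu(x)+u(x)^2 T_t1(x)=\int (u(y)-u(x))^2\,p(x,dy)\geq0$, where one expands the square and recognizes the three terms. One must be a little careful that this kernel representation is available at the level of $L^\infty$; since $T_t$ is positivity-preserving and Markovian on $L^\infty$, the quantity $T_t u^2-2uT_tu+u^2T_t1$ can be handled either via an abstract Cauchy--Schwarz / Jensen argument for the positive operator $T_t$ (namely $|T_t(uv)|\le (T_tu^2)^{1/2}(T_tv^2)^{1/2}$ with $v\equiv1$, refined to the displayed ``carré du champ'' form), or by explicitly invoking the kernel. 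Either way the inequality is classical and I would cite \cite[Lemma 1.1.6-1.1.7]{Chen-Fukushima} for the details rather than reproducing them.

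The only genuine subtlety—and the step I expect to need the most care—is the integrability and interchange-of-limits bookkeeping: $u\in L^2\cap L^\infty$ guarantees $u^2\in L^1\cap L^\infty$, so $T_t(u^2)$ and $u^2T_t1$ lie in $L^1$ and the identity $\int_X T_t(u^2)\,dm=\int_X u^2\,T_t1\,dm$ holds by approximating $u^2$ with $u^2\varphi_k$ (the sequence from the $\sigma$-finiteness construction recalled above) and passing to the limit by monotone convergence on both sides; likewise $u\,T_tu\in L^1$ since $u\in L^2$ and $T_tu\in L^2$. Once all integrals are finite and the extension of $T_t$ (resp.\ $K_\beta$) to $L^\infty$ is used consistently, the computation is purely formal. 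Since the statement is explicitly quoted as ``well known'' and referenced to \cite[Lemma 1.1.6-1.1.7, pp.7-9]{Chen-Fukushima}, I would keep the proof to the two short paragraphs above: the completing-the-square identity plus the kernel nonnegativity, with the integrability checks relegated to a remark or to the cited source.
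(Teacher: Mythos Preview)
Your argument is correct. The completing-the-square computation together with the duality identity $\int_X T_t(u^2)\,dm=\int_X u^2\,T_t1\,dm$ (and its resolvent analogue) yields both representation formulae, and the pointwise nonnegativity of $\sigma_t(u)$ and $\kappa_\beta(u)$ follows either from the kernel picture or, more abstractly, from the Cauchy--Schwarz inequality $(T_t u)^2\le (T_t1)(T_t u^2)$ for positive sub-Markovian operators, combined with $|T_t u|\le \|u\|_\infty T_t1$ on the exceptional set $\{T_t1=0\}$.

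Note, however, that the paper itself does not supply a proof of this lemma: it is stated as well known and attributed to \cite[Lemma 1.1.6--1.1.7, pp.~7--9]{Chen-Fukushima}. Your write-up is essentially the argument found there, so there is no discrepancy to report; if anything, your version makes the integrability bookkeeping (why $u^2\in L^1\cap L^\infty$, why the $\varphi_k$-approximation legitimises the duality step) more explicit than the cited source.
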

\section{Invariant sets, revisited}
We collect in this section some new and known results related to invariant sets. As aspect of novelty, we shall demonstrate that Mosco convergence preserves invariance of sets as well as  convergence of the traces with respect to invariant sets (convergence of parts of Dirichlet forms on invariant sets).\\
A subset $Y\subset X$ is said to be $T_t$-invariant (or $\calE$-invariant) whenever it is measurable and
\begin{align*}
T_t(1_Y u)= 1_Y T_t u, \text{ for any } u\in L^2 \text{ and some (any) } t>0.
\end{align*}
For short we shall simply say that $Y$  is invariant.\\
Thanks to the relationship between closed quadratic forms their semigroups and their resolvents, invariance of a measurable set $Y$ is equivalent to
\[
K_\alpha(1_Yu)=1_YK_\alpha u,\   \text{ for any } u\in L^2 \text{ and some (any) } \alpha>0.
\]
It is well known that the following two conditions are equivalent to the invariance of a measurable set $Y$
\[
T_t 1_Y=1_YT_t1\ \text{ for some (any) } \,t>0,\ K_\beta 1_Y=1_YK_\beta1\  \text{ for some (any) } \,\beta>0
\]
where, in this context, the semigroup $T_t$ and the resolvent $K_\beta$ are  those  induced by the $L^2$-semigroup and the $L^2$-resolvent of $\calE$ on $L^\infty$.\\
Let us analyze relationship between invariance of a given set  with respect to $\calE$ and with respect to their approximates $\calEt$ and  $\calEb$.
\begin{theo}
Let $Y$ be measurable. Then the following assertions are equivalent
\begin{enumerate}
\item $Y$ is invariant w.r.t. $\calEt$ for every $t>0$.
\item  $Y$ is invariant w.r.t. $\calEb$ for every $\beta>0$.
\item  $Y$ is invariant w.r.t. $\calE$.
\end{enumerate}
\label{ApproxInvariance}
\end{theo}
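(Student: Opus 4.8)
The plan is to run all three conditions through a single algebraic statement about the $L^2$-generator of $\calE$, namely that multiplication by $1_Y$ commutes with it. Write $P_Y$ for the bounded self-adjoint operator $u\mapsto 1_Yu$ on $L^2$, and recall from the characterizations collected just before the statement that $Y$ is invariant w.r.t. $\calE$ if and only if $T_tP_Y=P_YT_t$ for some (any) $t>0$, if and only if $K_\beta P_Y=P_YK_\beta$ for some (any) $\beta>0$.

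The core step: by definition, $Y$ is invariant w.r.t. $\calEt$ precisely when the $L^2$-semigroup generated by $\calEt$ commutes with $P_Y$. Since $\calEt$ is a bounded form, its generator is the bounded self-adjoint operator $L_t:=\tfrac1t(I-T_t)$ and its semigroup is $\e^{-sL_t}$, $s>0$. As $P_Y$ is bounded, $P_Y$ commutes with $\e^{-sL_t}$ for every $s>0$ if and only if $P_Y$ commutes with $L_t$ (``$\Leftarrow$'' from the norm‑convergent exponential series, ``$\Rightarrow$'' from $L_t=\lim_{s\downarrow0}\tfrac1s(I-\e^{-sL_t})$ in operator norm); and since $L_t=\tfrac1tI-\tfrac1tT_t$ is affine in $T_t$ with nonzero coefficient of $T_t$, $P_Y$ commutes with $L_t$ if and only if $P_Y$ commutes with $T_t$. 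Hence, for each fixed $t>0$, $Y$ is invariant w.r.t. $\calEt$ iff $P_YT_t=T_tP_Y$, i.e. iff $Y$ is invariant w.r.t. $\calE$; this yields (a)$\Leftrightarrow$(c). The identical reasoning, run with $L_\beta:=\beta(I-\beta K_\beta)=\beta I-\beta^2K_\beta$ (affine in $K_\beta$ with nonzero coefficient of $K_\beta$), shows that for each fixed $\beta>0$, $Y$ is invariant w.r.t. $\calEb$ iff $P_YK_\beta=K_\beta P_Y$, i.e. iff $Y$ is invariant w.r.t. $\calE$, which gives (b)$\Leftrightarrow$(c). In particular invariance w.r.t. a single $\calEt$, or a single $\calEb$, already forces invariance w.r.t. $\calE$, so the universal quantifiers over $t$ and $\beta$ in (a) and (b) are harmless.

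The only points requiring mild care are the identification of the generator and semigroup of the bounded approximating forms and the passage ``semigroup commutes with $P_Y$ $\Leftrightarrow$ generator commutes with $P_Y$''; both are routine exactly because $\calEt$ and $\calEb$ are bounded, so no unbounded‑operator domain subtleties arise, and I would expect this to be the least delicate of what is essentially an entirely formal argument. If one prefers to argue at the level of the forms themselves, there is an equivalent route via the representation formulae of Lemma~\ref{KernelRep}: for $u\in L^2\cap L^\infty$ one checks that $\calEt[u]=\calEt[1_Yu]+\calEt[1_{Y^c}u]$ is equivalent to $(1_Yu,\,T_t(1_{Y^c}u))=0$; taking $u=\varphi_k$ and letting $k\to\infty$, the positivity preservation of $T_t$ forces $1_YT_t1_{Y^c}=0$ and, symmetrically, $1_{Y^c}T_t1_Y=0$, that is $T_t1_Y=1_YT_t1$, which is the recalled criterion for $\calE$-invariance, and the converse together with the $\calEb$-analogue are handled the same way. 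Either way the mechanism is identical: the approximating forms are obtained from $T_t$, resp. $K_\beta$, by transformations that neither create nor destroy commutation with $P_Y$.
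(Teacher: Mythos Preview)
Your argument is correct. For the direction (c)$\Rightarrow$(a),(b) you do exactly what the paper does: write the semigroup of the bounded form as a norm-convergent exponential series in $T_t$ (resp.\ $K_\beta$) and push $P_Y$ through term by term. The difference lies in the converse directions (a)$\Rightarrow$(c) and (b)$\Rightarrow$(c). The paper obtains these by a forward reference to its Mosco-convergence result (Theorem~\ref{InvarianceMosco}): since $\calEt\to\calE$ and $\calEb\to\calE$ in the sense of Mosco, invariance along the approximating family passes to the limit. You instead stay at the level of bounded operators and recover the generator from its semigroup via $L_t=\lim_{s\downarrow 0}\tfrac1s(I-\e^{-sL_t})$ in operator norm, so that commutation of $P_Y$ with $(T_s^{(t)})_{s>0}$ forces commutation with $L_t$ and hence with $T_t$ itself.

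Your route is more self-contained (no forward reference needed) and in fact yields a little more: invariance with respect to a \emph{single} $\calEt$, or a single $\calEb$, already implies $\calE$-invariance, whereas the paper's Mosco argument a priori consumes invariance along a whole sequence $t\downarrow 0$ (resp.\ $\beta\uparrow\infty$). The paper's approach, on the other hand, situates the statement as a special case of the general stability of invariant sets under Mosco convergence, which is one of its themes. Your alternative sketch via Lemma~\ref{KernelRep} and the functions $\varphi_k$ is also fine, relying on positivity of $T_t$ to pass from $(1_Yu,T_t(1_{Y^c}u))=0$ to $1_YT_t1_{Y^c}=0$.
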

\begin{proof}
Let $Y$ be measurable. The implications $(a)\to(c)$ and  $(b)\to(c)$ follow from Theorem \ref{InvarianceMosco}-(a).\\
Conversely assume that $Y$ is  $\calE$-invariant. Let $(T_s^{(t)})_{s>0}$ be the semigroup related to $\calEt$.  Making use of the spectral theorem, an elementary computation leads to
\[
T_s^{(t)}=\exp(-s/t(1-T_t))=e^{-s/t}\sum_{k=0}^\infty \frac{1}{k!}(s/t)^k T_t^k, \forall\,s,t>0,
\]
uniformly. Regarding the $\calE$-invariance of $Y$, induction on $k$ leads to
\[
T_t^k(1_Y u)= T_t^{k-1}(1_YT_t u)=1_YT_t^k u,\ \forall\,t>0,\ k\in\N,\ u\in L^2.
\]
Hence  $T_s^{(t)} (1_Yu)= 1_Y T_s^{(t)}$ and $Y$ is $\calEt$ for every $t>0$.\\
To prove the implication  $(c)\to(b)$, let $T_t^{(\beta)}$ be the semigroup related to $\calEb$. If $Y$ is $\calE$-invariant, then induction on $k$ leads to
\[
K_\alpha^k(1_Yu)=1_YK_\alpha^k u,\ \forall\,\alpha>0,\ k\in\N,\ u\in L^2.
\]
Obviously $T_t^{(\beta)}=\exp(t\beta(\beta K_\beta -1))$. Thus writing the latter identity as a series and   using the induction formula we obtain the $\calEb$-invariance of $Y$ and the proof is finished.
\end{proof}
The following lemma is well known (see \cite[Theorem 1.6.1, p.54]{Fukushima}), we include it for the convenience of the reader. At this stage, we recall that a quadratic form with domain in $L^2$ is  a Dirichlet form in the wide sense whenever it fulfills all properties of a Dirichlet form except being densely defined.
\begin{lem}
Let $Y\subset X$ be measurable. Then the following assertions are equivalent.
\begin{enumerate}
\item $Y$ is invariant.
\item For each $u\in\calD$, $1_Yu,1_{Y^c}\in\calD$ and
\[
\calE[u]=\calE[1_Y u] + \calE[1_{Y^c} u].
\]
Moreover if $Y$ is invariant then the quadratic form  $\calE^Y$ defined by
\begin{eqnarray}
\calD^Y:=\dom(\calE^Y)=\calD,\ \calE^Y[u]=\calE[1_Yu],
\end{eqnarray}
is a Dirichlet form in the wide sense in $L^2$ and is in fact a Dirichlet form in $L^2(Y,m|Y)$.
\end{enumerate}
\label{PartOfE}
\end{lem}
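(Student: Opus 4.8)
The plan is to establish the equivalence (a)$\Leftrightarrow$(b) by transferring the question to the Deny--Yosida approximants $\calEb$, for which invariance of $Y$ is the elementary commutation relation $K_\beta(1_Y u)=1_Y K_\beta u$ ($u\in L^2$, $\beta>0$), and then to pass to the limit $\beta\uparrow\infty$ through the spectral identity $\calD=\{u\in L^2:\lim_{\beta\uparrow\infty}\calEb[u]<\infty\}$, $\calE[u]=\lim_\beta\calEb[u]$. The ``moreover'' part will then be a short verification of the axioms for $\calE^Y$.

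For (a)$\Rightarrow$(b): assume $Y$ invariant, so $K_\beta(1_Y u)=1_Y K_\beta u$ for all $u\in L^2$ and all $\beta>0$. Since $1_Y$ is a self-adjoint projection on $L^2$ (hence $(1_Y f,1_Y g)=(1_Y f,g)$), substituting this relation into $\calEb[v]=\beta(v-\beta K_\beta v,v)$ gives
\[
\calEb[1_Y u]=\beta\bigl(1_Y(u-\beta K_\beta u),u\bigr),\qquad \calEb[1_{Y^c}u]=\beta\bigl(1_{Y^c}(u-\beta K_\beta u),u\bigr),
\]
and, adding, $\calEb[1_Y u]+\calEb[1_{Y^c}u]=\beta(u-\beta K_\beta u,u)=\calEb[u]$ for every $u\in L^2$ and $\beta>0$. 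Fix $u\in\calD$: both summands are non-negative and each is dominated by $\calEb[u]\le\calE[u]<\infty$; since $\beta\mapsto\calEb[\cdot]$ is monotone increasing, $\sup_\beta\calEb[1_Y u]<\infty$ and $\sup_\beta\calEb[1_{Y^c}u]<\infty$, whence $1_Y u,1_{Y^c}u\in\calD$. Letting $\beta\uparrow\infty$ in the identity above yields $\calE[u]=\calE[1_Y u]+\calE[1_{Y^c}u]$.

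For (b)$\Rightarrow$(a): I first note that (b) forces the block-orthogonality $\calE(1_Y a,1_{Y^c}b)=0$ for all $a,b\in\calD$. Indeed, put $c:=1_Y a+1_{Y^c}b\in\calD$ (by (b)); since $1_Y c=1_Y a$ and $1_{Y^c}c=1_{Y^c}b$, applying (b) to $c$ gives $\calE[c]=\calE[1_Y a]+\calE[1_{Y^c}b]$, while bilinearity gives $\calE[c]=\calE[1_Y a]+2\,\calE(1_Y a,1_{Y^c}b)+\calE[1_{Y^c}b]$, and comparison yields the claim. Now fix $\beta>0$, $u\in L^2$, and set $w:=1_Y K_\beta u$, which lies in $\calD$ by (b) since $K_\beta u\in\calD$. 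Decomposing test functions $v=1_Y v+1_{Y^c}v$ and using the block-orthogonality twice, one checks that for all $v\in\calD$
\[
\calE(w,v)=\calE(1_Y K_\beta u,1_Y v)=\calE(K_\beta u,1_Y v).
\]
Combining this with the variational identity $\calE(K_\beta u,\varphi)+\beta(K_\beta u,\varphi)=(u,\varphi)$ for $\varphi\in\calD$ (which characterises $K_\beta=(\beta+A)^{-1}$), taken with $\varphi=1_Y v$, and with $(u,1_Y v)=(1_Y u,v)$ and $(K_\beta u,1_Y v)=(w,v)$, we obtain $\calE(w,v)+\beta(w,v)=(1_Y u,v)$ for all $v\in\calD$. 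By the uniqueness of the Lax--Milgram representation of the resolvent, $w=K_\beta(1_Y u)$; that is, $1_Y K_\beta u=K_\beta(1_Y u)$ for all $u\in L^2$, one of the recalled characterisations of the invariance of $Y$. (Once this is in hand, invariance could alternatively be read off via Theorem \ref{ApproxInvariance}.)

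Finally, for the ``moreover'' part, let $Y$ be invariant. Then $\calE^Y[u]=\calE[1_Y u]$ is a non-negative quadratic form on $\calD$ with $\calE^Y\le\calE$ by (b), and $\calE^Y(u,v)=\calE(1_Y u,1_Y v)=\calE(1_Y u,v)$ by polarisation and block-orthogonality. The Markov property transfers from $\calE$ because $(1_Y u)_{0,1}=1_Y u_{0,1}$, so $\calE^Y[u_{0,1}]=\calE[(1_Y u)_{0,1}]\le\calE[1_Y u]=\calE^Y[u]$. Closedness is inherited from that of $\calE$ through the block structure (an $\calE^Y$-Cauchy sequence in $\calD$ converging in $L^2$ has its $1_Y$-part converging in $\calD$; the associated Markovian semigroup is $T_t^Y u=1_Y T_t u+1_{Y^c}u$), so $\calE^Y$ is a Dirichlet form in the wide sense on $L^2$; restricting it to the $\calE_1$-closed subspace $1_Y\calD=\calD\cap L^2(Y,m|_Y)$, on which it coincides with $\calE$, yields a closed, densely defined, Markovian form on $L^2(Y,m|_Y)$, i.e.\ a genuine Dirichlet form there. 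The only step that is more than bookkeeping is (b)$\Rightarrow$(a): the hypothesis provides only a splitting of the \emph{form}, and the work lies in upgrading it to a splitting of the \emph{resolvent}, for which the block-orthogonality $\calE(1_Y\calD,1_{Y^c}\calD)=0$ together with the variational description of $K_\beta$ are the essential tools.
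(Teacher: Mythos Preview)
Your proof is correct and follows essentially the same route as the paper's. The only cosmetic difference is that for (a)$\Rightarrow$(b) you use the Deny--Yosida approximants $\calEb$ and the resolvent commutation $K_\beta(1_Yu)=1_YK_\beta u$, whereas the paper uses the time-dependent approximants $\calEt$ and the semigroup commutation; the passage to the limit is identical in both cases. For (b)$\Rightarrow$(a) your explicit derivation of the block-orthogonality $\calE(1_Ya,1_{Y^c}b)=0$ via the auxiliary element $c=1_Ya+1_{Y^c}b$ is just an unpacking of the paper's one-line polarisation claim $\calE(1_Yu,v)=\calE(1_Yu,1_Yv)=\calE(u,1_Yv)$, and the subsequent resolvent computation (variational identity for $K_\beta$ plus uniqueness) is exactly the paper's argument $\calE_\beta(K_\beta 1_Yf,v)=(1_Yf,v)=(f,1_Yv)=\calE_\beta(K_\beta f,1_Yv)=\calE_\beta(1_YK_\beta f,v)$ written out in slightly more detail.
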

\begin{proof}
We include the proof, for the sake of completeness.\\
$(a)\to(b)$: Let $u\in\calD$. Then the measurability of $Y$ yields the measurability of $1_Yu$. Owing to the invariance of $Y$ an easy computation yields
\begin{eqnarray}
\calEt[u] =\calEt[1_Yu] + \calEt[1_{Y^c}u].
\end{eqnarray}
Hence both $\calEt[1_Yu]$ and $\calEt[1_{Y^c}u]$ are dominated by $\calE[u]$, so that $1_Yu,\ 1_{Y^c}u$ lie in $\calD$. Now Letting $t\downarrow 0$ in the latter formula yields (b).\\
Conversely suppose that (ii) holds true. Then for any $u,v\in\calD$ it holds $\calE(1_Yu,v)=\calE(1_Yu,1_Yv)=\calE(u,1_Yv)$. Thus for any $f\in L^2,v\in\calD$ we obtain
\[
\calE_\beta(K_\beta 1_Yf,v)=(1_Yf,v)=(f,1_Yv)=\calE_\beta(K_\beta f,1_Yv)=\calE_\beta(1_YK_\beta f,v),
\]
which amounts to $K_\beta 1_Yf=1_YK_\beta f$. Hence $Y$ is $T_t$-invariant.\\
Let us now prove the last statement. We first prove that $\calE^Y$ is closed in $L^2(Y,m|_Y)$. Let $(u_n)\subset\calD$ be a $\calE^Y_1$-Cauchy sequence. Then the sequence $v_n=1_Eu_n$ is a $\calE_1$-Cauchy sequence (by (c)). Thus there is $v\in\calD$ such that $\calE_1[v_n-v]\to 0$. Obviously $v=0,\ a.e.$ on $Y^c$. Thus  $\calE^Y_1[u_n-v]=\calE_1[v_n-v]\to 0$ and $\calE^Y$ is closed. Finally, for $u\in\calD$ we have $(1_Y u\vee 0)\wedge 1=1_Y(u\vee 0)\wedge 1\in\calD$ and $\calE^Y[(u\vee 0)\wedge 1]=\calE[1_Y(u\vee 0)\wedge 1]= \calE[(1_Yu\vee 0)\wedge 1]\leq\calE[1_Yu]=\calE^Y[u]$. Hence $\calE^Y$ satisfies  Markov property and is therefore  a Dirichlet form. Obviously $\calE^Y$ is densely defined and all these considerations imply that the form is a Dirichlet form.
\end{proof}
For an invariant set $Y$,  we clarify now  the relationship between the part of $\calE$ in $L^2(Y,m|_Y)$, $\calE^Y$ and the trace with respect to the measure $m|_Y$, which we denote by $\check{\calE^Y}$. Let us indicate that the concept of the trace of a Dirichlet form was introduced in \cite[Chap.6.2]{Fukushima}. However we shall adopt the method developed in \cite[Theorem 2.4]{BBST}
\begin{prop}
Let $Y$ be invariant and $J$ the linear operator defined by
\[
J:(\calD,\calE_1)\to L^2(Y,m|_Y),\ Ju = u|_Y.
\]
Then $\check{\calE^Y}=\calE^Y|_{\ran J}$.
\end{prop}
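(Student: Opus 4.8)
The plan is to identify the trace Dirichlet form $\check{\calE^Y}$ via its variational (minimization) characterization and then check that this minimization is trivial on $\ran J$ because $Y$ is invariant. Recall that, following the method of \cite[Theorem 2.4]{BBST}, the trace of $\calE^Y$ with respect to $m|_Y$ is defined on the space $\ran J = \{u|_Y : u\in\calD\}$ by
\[
\check{\calE^Y}[Ju] = \inf\{\calE^Y[w] : w\in\calD,\ w|_Y = u|_Y\},
\]
so the first step is simply to recall this definition and note that $\ran J$ carries the natural quotient structure making $J$ a contraction (or partial isometry) from $(\calD,\calE_1^Y)$ onto $\ran J$. On the other hand, $\calE^Y|_{\ran J}$ should be read as the form $w\mapsto \calE^Y[w]$ for $w\in\calD$ restricted/descended to the quotient $\ran J$, i.e. we must show that this descends well (is independent of the representative) and equals the infimum above.

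The key step is the observation that, because $Y$ is invariant, $\calE^Y$ already splits off the $Y^c$-part completely: by Lemma \ref{PartOfE}-(b), for any $w\in\calD$ we have $\calE^Y[w] = \calE[1_Y w]$, and $1_Y w\in\calD$ with $(1_Y w)|_Y = w|_Y$. Hence in the infimum defining $\check{\calE^Y}[Ju]$ we may always replace a competitor $w$ by $1_Y w$ without changing the constraint $w|_Y = u|_Y$, and
\[
\calE^Y[1_Y w] = \calE[1_Y (1_Y w)] = \calE[1_Y w] = \calE^Y[w],
\]
so passing to $1_Y w$ does not even change the value. More importantly, if $w_1, w_2\in\calD$ both satisfy $w_i|_Y = u|_Y$, then $1_Y w_1 = 1_Y w_2$ a.e. (they agree on $Y$ and both vanish on $Y^c$), whence $\calE^Y[w_1] = \calE[1_Y w_1] = \calE[1_Y w_2] = \calE^Y[w_2]$. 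Therefore the map $Ju\mapsto \calE^Y[w]$ is well defined independently of the representative $w$, the infimum in the definition of $\check{\calE^Y}$ is attained by every competitor and equals this common value, and consequently $\check{\calE^Y}[Ju] = \calE^Y[w] = (\calE^Y|_{\ran J})[Ju]$ for all $u\in\calD$.

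I would then remark that the bilinear identity follows by polarization, and that this also shows the trace operation is idempotent here and that $J$ restricted to $1_Y\calD$ is an isometry onto $(\ran J, \check{\calE^Y}_1)$ after the obvious identification. The main obstacle — really the only subtle point — is bookkeeping about what ``$\calE^Y|_{\ran J}$'' and ``$\check{\calE^Y}$'' mean precisely as forms on the quotient space $\ran J$ rather than on $\calD$: one must be careful that $\ran J$ is being given the quotient Hilbert-space norm coming from $\calE_1^Y$ (equivalently, that an element of $\ran J$ is identified with the unique $1_Y w$ representing it), and that with this identification the restriction $\calE^Y|_{\ran J}$ is literally $\calE^Y$ evaluated on that canonical representative. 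Once the definitions are unwound, invariance of $Y$ makes the infimum collapse and the identity is immediate; there is no analytic difficulty, only the need to state the quotient construction cleanly, exactly as in \cite[Theorem 2.4]{BBST}.
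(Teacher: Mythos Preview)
Your argument is correct and rests on the same idea as the paper's: invariance of $Y$ makes $1_Y u$ the canonical representative of the fiber $J^{-1}(Ju)$, so the trace collapses to $\calE[1_Y u]=\calE^Y[u]$. Two small remarks are in order.

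First, $\check{\calE^Y}$ in the paper denotes the trace of $\calE$ (not of $\calE^Y$) with respect to $m|_Y$, so the infimum you write should run over $\calE[w]$ rather than $\calE^Y[w]$. By the splitting $\calE[w]=\calE[1_Y w]+\calE[1_{Y^c}w]$ and the fact that the second summand can be driven to $0$ by choosing $w=1_Y u$, the two infima coincide and your conclusion is unaffected.

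Second, the paper follows the BBST construction more literally: for each $\lambda>0$ it identifies the $\calE_\lambda$-orthogonal projection $P_\lambda$ of $u$ onto $(\ker J)^{\perp_{\calE_\lambda}}$ as $1_Y u$ (this is where invariance enters, since $\calE_\lambda(1_Y u,v)=0$ whenever $v|_Y=0$), computes $\check{\calE}_\lambda[Ju]=\calE_\lambda[1_Y u]=\calE^Y_\lambda[u]$, and then lets $\lambda\downarrow 0$ and invokes closedness of $\calE^Y$ (Lemma~\ref{PartOfE}) to apply \cite[Theorem~2.4]{BBST}. You shortcut this by working directly at $\lambda=0$ with a variational formula; that is legitimate here precisely because the minimizer $1_Y u$ is independent of $\lambda$, but strictly speaking the BBST framework defines the trace through the $\lambda>0$ projections and a limit, so the formula you display for $\check{\calE^Y}$ is a (here correct) simplification rather than the definition being invoked.
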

\begin{proof}
We follow the construction made in \cite{BBST}.\\
For each $\lambda>0$, we set $P_\lambda$ the $\calE_\lambda$-orthogonal projection of $\calD$ onto the $\calE_\lambda$-orthogonal of $\ker J$, $\ker J^{\perp_{\calE_\lambda}}$. Let $u\in\calD$. Then $P_\lambda u$ is the unique element form $\calD$ such that $u-P_\lambda u\in\ker J$ and $P_\lambda u\in{\ker J}^{\perp_{\calE_\lambda}}$. Thus $P_\lambda u=u|_Y$ and.
\begin{align}
\ccalE_\lambda[Ju]:=\calE_\lambda[P_\lambda u]=\calE^Y_\lambda [u]\to \calE^Y[u]\ \text{ as } \lambda\downarrow 0.
\end{align}
Since by Lemma \ref{PartOfE} the latter form is closed, the claim follows from \cite[Theorem 2.4]{BBST}.

\end{proof}
Next we discuss the effect of Mosco convergence on invariant sets.\\
Let us recall the definition of Mosco convergence, see \cite[Definition 2.1.1]{Mosco1994}.
Let $(\form_n)$ be a sequence of positive quadratic forms in a Hilbert space $\calH$, $\form_\infty$ a quadratic form in $\calH$.
We say that $(\form_n)$ \emph{Mosco-converges} to the form $\form_\infty$ in $\calH$ provided
\begin{enumerate}
  \item[(M1)]
    for all $(u_n)$ in $\calH$, $u\in \calH$ such that $u_n\to u$ weakly in $\calH$ we have $\liminf_{n\to\infty} \form_n[u_n] \geq \form_\infty[u]$,
  \item[(M2)]
    for all $u\in \calH$ there exists $(u_n)$ in $\calH$ such that $u_n\to u$ in $\calH$ and $\limsup_{n\to\infty} \form_n[u_n]\leq \form_\infty[u]$.
\end{enumerate}
Note that for this definition we extend the quadratic forms to the whole space by setting them $+\infty$ for elements not in their domain.\\
According to \cite[Corollary 2.6.1]{Mosco1994}, Mosco convergence is equivalent to strong resolvent convergence of the corresponding resolvents and hence equivalent to strong convergence of the corresponding semigroups.\\
We also quote the known fact that Mosco limit of a sequence of Dirichlet forms is, in general, a Dirichlet form in the wide sense.
\begin{lem}
Let $P:L^2\to L^2$ be a projection and $(\calE^k)$ be a sequence of Dirichlet forms with domains $\calD_k\subset L^2$. Assume that $\calE^k$ converges in the sense of Mosco to a Dirichlet form $\calE^\infty$ and that $PT_t^{(k)} = T_t^{(k)}P$ for each $k$ and some (and hence every) $t>0$. Then $PT_t^{\infty}=T_t^{\infty}P$ for every $t>0$.
\label{projection}
\end{lem}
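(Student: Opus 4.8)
The plan is to reduce the statement to the equivalence, already recalled in this section, between Mosco convergence and strong convergence of the associated semigroups, and then simply to commute the limit past the (bounded) projection $P$.

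First I would recall that, by \cite[Corollary 2.6.1]{Mosco1994}, the Mosco convergence $\calE^k\to\calE^\infty$ is equivalent to strong convergence of the corresponding semigroups: for every $t>0$ and every $f\in L^2$,
\[
T_t^{(k)}f\longrightarrow T_t^{\infty}f\quad\text{in }L^2\text{ as }k\to\infty ,
\]
where $T_t^{\infty}$ is the strongly continuous contraction semigroup attached to the limit form $\calE^\infty$ (a Dirichlet form in the wide sense).

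Next, fix $t>0$ and $f\in L^2$. Since $P$ is a projection on the Hilbert space $L^2$, it is continuous, so applying $P$ to the displayed convergence yields $P T_t^{(k)}f\to P T_t^{\infty}f$ in $L^2$. On the other hand, by hypothesis $P T_t^{(k)}f=T_t^{(k)}(Pf)$ for every $k$, and applying the semigroup convergence with $f$ replaced by $Pf\in L^2$ gives $T_t^{(k)}(Pf)\to T_t^{\infty}(Pf)$. By uniqueness of the limit in $L^2$ we conclude $P T_t^{\infty}f=T_t^{\infty}(Pf)$. Since $f\in L^2$ and $t>0$ were arbitrary, $P T_t^{\infty}=T_t^{\infty}P$ for every $t>0$.

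I do not expect any genuine obstacle here: the only points that need to be handled with a little care are that the semigroup convergence furnished by Mosco's theorem holds for each fixed $t>0$ (not merely in an averaged or resolvent sense) and that $T_t^{\infty}$ is indeed the semigroup of the limit form $\calE^\infty$, both of which are exactly the content of the cited equivalence and may be invoked directly. An equally short alternative would be to run the same argument at the level of the resolvents $K_\alpha^{(k)}\to K_\alpha^{\infty}$ (strong resolvent convergence), using that commutation with $P$ for the semigroup is equivalent to commutation with $P$ for the resolvent; but the semigroup version above is the most transparent.
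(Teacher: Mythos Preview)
Your argument is correct and is exactly the approach taken in the paper, which simply invokes \cite[Corollary 2.6.1]{Mosco1994} and says the result follows from strong semigroup convergence. You have merely spelled out the one-line limit argument that the paper leaves implicit.
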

\begin{proof}
According to \cite[Corollary 2.6.1]{Mosco1994}, Mosco convergence is equivalent to strong convergence of the related semigroups, a fact from which the result follows.
\end{proof}
\begin{theo}
Let $(\calE^k)$ be a sequence of Dirichlet forms with domains $\dom(\calE^k)\subset L^2$. Assume that $\calE^k$ converges in the sense of Mosco to a Dirichlet form $\calE^\infty$. Let  $Y$ be  a $T_t^{(k)}$-invariant set for each $k\in\N$. Then
\begin{enumerate}
\item The set $Y$ is $T_t^\infty$-invariant as well.
\item $(\calE^k)^Y$ converges to $(\calE^\infty)^Y$ in the sense of Mosco.
\end{enumerate}
\label{InvarianceMosco}
\end{theo}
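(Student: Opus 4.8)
For part (a) the plan is to invoke Lemma \ref{projection} directly. Multiplication by $1_Y$ is a bounded (orthogonal) projection $P$ on $L^2=L^2(X,m)$, whose range is the subspace of functions supported in $Y$, which we identify with $L^2(Y,m|_Y)$. By definition, $T_t^{(k)}$-invariance of $Y$ is exactly the commutation relation $PT_t^{(k)}=T_t^{(k)}P$ for every $k$ and every $t>0$. Hence Lemma \ref{projection} applies and gives $PT_t^\infty=T_t^\infty P$ for every $t>0$, i.e.\ $T_t^\infty(1_Yu)=1_YT_t^\infty u$ for all $u\in L^2$, which is precisely the $T_t^\infty$-invariance of $Y$.

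For part (b), I would first note that, thanks to (a) and Lemma \ref{PartOfE}, all the forms $(\calE^k)^Y$ and $(\calE^\infty)^Y$ are genuine Dirichlet forms on the \emph{fixed} Hilbert space $L^2(Y,m|_Y)$, so that the assertion ``$(\calE^k)^Y\to(\calE^\infty)^Y$ in the sense of Mosco'' is to be read in $L^2(Y,m|_Y)$. By \cite[Corollary 2.6.1]{Mosco1994} it then suffices to establish strong convergence of the associated semigroups in $L^2(Y,m|_Y)$. The structural point is that, $Y$ being invariant for each $\calE^k$ and for $\calE^\infty$, every $T_t^{(k)}$ and $T_t^\infty$ leaves the closed subspace $1_YL^2\simeq L^2(Y,m|_Y)$ invariant, and the restriction $T_t^{(k)}|_{L^2(Y,m|_Y)}$ is exactly the semigroup of $(\calE^k)^Y$ (and likewise for $\infty$); this follows from the elementary computation of the generator of a restricted self-adjoint semigroup together with the identity $(\calE^k)^Y[v]=\calE^k[v]$ for $v$ supported in $Y$. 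Granting this identification, strong convergence is immediate: for $v\in L^2(Y,m|_Y)$, viewed as an element of $L^2$, one has $T_t^{(k)}v\to T_t^\infty v$ in $L^2$ by the Mosco convergence $\calE^k\to\calE^\infty$ (again \cite[Corollary 2.6.1]{Mosco1994}), and since all these vectors lie in $1_YL^2$ the convergence persists in $L^2(Y,m|_Y)$. This proves (b).

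Alternatively, (b) can be obtained by checking (M1) and (M2) directly in $L^2(Y,m|_Y)$, using that zero-extension $L^2(Y,m|_Y)\to L^2$ is an isometry preserving weak convergence: the decomposition formula $\calE^k[u]=\calE^k[1_Yu]+\calE^k[1_{Y^c}u]$ of Lemma \ref{PartOfE} gives the $\limsup$ bound along a recovery sequence of the form $v_k:=1_Yu_k$, and (M1) for the sequence $(\calE^k)$ applied to the zero-extensions gives the $\liminf$ bound. Either way, the only genuine obstacle is bookkeeping: fixing $L^2(Y,m|_Y)$ as the ambient space and verifying that the semigroup of the part $(\calE^k)^Y$ is the restriction of $T_t^{(k)}$ to $1_YL^2$. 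Once that is settled, the theorem reduces to the already-recorded equivalence between Mosco convergence and strong semigroup convergence, plus (in the second route) the decomposition formula of Lemma \ref{PartOfE}.
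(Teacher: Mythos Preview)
Your proposal is correct and matches the paper's own argument essentially verbatim: part (a) is exactly Lemma \ref{projection} applied to the multiplication projection $Pu=1_Yu$, and your primary route for (b) is the paper's argument that $(T_t^{(k)})^Yv=PT_t^{(k)}P\tilde v\to PT_t^\infty P\tilde v=(T_t^\infty)^Yv$ combined with \cite[Corollary 2.6.1]{Mosco1994}. Your alternative (M1)/(M2) verification is a harmless extra that the paper does not pursue.
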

\begin{proof}
For a $T_t$-invariant set $Y$, let us consider the  projection $Pu=1_Yu$.\\
To prove assertion (a) it suffices to apply Lemma \ref{projection} with the projection $P$ and to observe that invariance is equivalent to commutability of the semigroup and the projection $P$. To prove  assertion (b), let $v\in L^2(Y,m|_Y)$. Set $\tilde v$ the extension of $v$ by $0$ on $X\setminus Y$. Then  $(T_t^{(k)})^Y v=PT_t^{(k)}P \tilde v\to PT_t^{\infty}P\tilde v=(T_t^{\infty})^Y v$. Once again making use of   \cite[Corollary 2.6.1]{Mosco1994}, we get the claim.
\end{proof}
\begin{rk}
Theorem \ref{InvarianceMosco} indicates that there are much more invariant sets for the limit Dirichlet form then it is for the approximating forms. This may explain why Mosco convergence does not preserve irreducibility. Here are some examples which confirm this observation.
\end{rk}
\begin{exa}{(Decoupling by one $\delta$-interaction)}.
We consider $L^2:=L^2(\R,dx)$,
\[
\dom(\calE^k)=W^{1,2}(\R),\ \calE^k[u]=\int_{\R} (u')^2\,dx + ku^2(0),\ \forall\,u\in\,W^{1,2}(\R),\ k\in\N.
\]
It is easy to check that $(\calE^k)$ is a monotone increasing sequence of irreducible Dirichlet forms. Moreover by Kato's theorem for monotone convergence of closed forms, $(\calE^k)$ converges in the sense of Mosco to the Dirichlet form $\calE^\infty$ defined by
\[
\dom(\calE^\infty)=W_0^{1,2}(\R\setminus\{0\}),\ \calE^\infty[u]=\int_\R (u')^2\,dx,\ \forall\,u\in W_0^{1,2}(\R\setminus\{0\}).
\]
Let us prove that $\calE^\infty$ is however not irreducible. To that end it suffices to prove that $(0,\infty)$ is $T_t^\infty$-invariant. Indeed, let $u\in W_0^{1,2}(\R\setminus\{0\})$. Then $u(0)=0$. Hence the functions $u_1:=1_{(0,\infty)}u\in W_0^{1,2}(\R\setminus\{0\})$ and $u_2:=1_{(-\infty,0)}u\in W_0^{1,2}(\R\setminus\{0\})$. Moreover $\calE^\infty[u] = \calE^\infty[u_1] + \calE^\infty[u_2]$. Hence $\calE^\infty$ is not irreducible. In fact $\calE^\infty$ has two nontrivial invariant sets which are $(-\infty,0)$ and $(0,\infty)$.
\label{OneDelta}
\end{exa}
\begin{exa}{(Decoupling by many $\delta$-interactions)}.
In $L^2:=L^2(\R,dx)$ we consider the family $\calE^n,\ n\in\N$ of Dirichlet forms defined by
\begin{align*}
\dom(\calE^n)&=\{u\in W^{1,2}(\R),\ \sum_{k\in\Z} u^2(k)<\infty\},\\
&\calE^n[u]=\int_\R (u')^2\,dx + n \sum_{k\in\Z} u^2(k),\ \forall\,u\in\,W^{1,2}(\R).
\end{align*}
As $(\calE^n)_{n}$ is monotone increasing, it converges in sense of Mosco to the Dirichlet form $\calE^\infty$, corresponding to the Dirichlet Laplacian on $\R\setminus\Z$. Arguing as before, we see that every interval $(k,k+1),\ k\in\Z$ is $\calE^\infty$-invariant while the $\calE^n$'s are irreducible.
\label{ManyDelta}
\end{exa}
\begin{exa}{(Every measurable set is invariant)}.
In $L^2:=L^2(\R,dx)$ we consider the family $\calE^n,\ n\in\N$ of Dirichlet forms defined by
\begin{align*}
\dom(\calE^n)=W^{1,2}(\R),\ \calE^n[u]=\frac{1}{n}\int_\R (u')^2\,dx + u^2(0),\ \forall\,u\in W^{1,2}(\R).
\end{align*}
As $(\calE^n)_{n}$ is monotone decreasing, it converges in the sense of Mosco to the closable part of the form $u^2(0)$ on $W^{1,2}(\R)$. However, the latter form is not closable. Thus  $\calE^n$ converges to $\calE^\infty =0$ with domain $L^2$, in the sense of Mosco. Hence every Borel measurable subset of $\R$ is $\calE^\infty$-invariant while $\calE^n$ is irreducible for each integer $n$.

\end{exa}

\begin{rk}
{\rm
The mentioned examples extend to higher dimensions if one changes point interactions by $\delta$-sphere interactions.

}
\end{rk}
\section{Decomposition formulae and their consequences}
We turn our attention to establish decomposition formulae for Dirichlet forms. The first one concerns decomposition into transient and recurrent parts.\\
For the concepts of transience and recurrence we shall adopt those introduced by Chen--Fukushima \cite[Chapt. 2]{Chen-Fukushima}. Let us recall them for the convenience of the reader.
Let us consider the family of linear operators as follows:
\[
S_t:L^2\to L^2,\ S_t f=\int_0^t T_s f\,ds,\ t>0.
\]
Then for each $t>0$ the operator $S_t$ is bounded and $\|S_t f\|\leq t\|f\|$ for any $f\in L^2$. By the $\sigma$-finiteness of $(X,m)$ together with the Markov property of $T_t$, the latter inequality extends to elements from $L^2\cap L^1$. This observation enables one to extend operators $S_t$ into bounded operators from  $L^1$ into itself. Moreover   $\|S_t f\|_{L^1}\leq t\|f\|_{L^1}$ for any $f\in L^1$. The same properties hold true for the resolvent family $K_\alpha,\ \alpha>0$. We also quote that $S_t$ and $K_\alpha$ on $L^1$ enjoy both positivity and monotonicity properties: For every $0<s\leq t,\ 0\leq\alpha\leq\beta$ and every $f\in L^1_+$ (the set of positive functions from $L^1$) it holds
\[
0\leq S_s f\leq S_t f\ \text{ and } 0\leq K_\alpha f\leq K_\beta f.
\]
Hence for every $f\in L^1_+$ the function
\[
Kf:=\lim_{n\to\infty} S_n f=\lim_{n\to\infty} K_{1/n} f,
\]
is well defined, with maybe infinite values.
\begin{defi}
We say that the Dirichlet form $\calE$, or the related semigroup $(T_t)_{t>0}$, is transient whenever $Kg<\infty$ for some nonnegative $g\in L^1_+$.\\
The form $\calE$,  or the related semigroup $(T_t)_{t>0}$, is called recurrent whenever   $Kf$ is either $0$ or $\infty$ for any  $f\in L^1_+$.
\end{defi}
Equivalent conditions for transience and recurrent can be found in \cite[Chapt.2]{Chen-Fukushima}. For example, according \cite[Proposition 2.1.3, p.39]{Chen-Fukushima} the semigroup $T_t$ is transient if and only if
\begin{align}
Gf<\infty\ \text{ for some }\  f\in L^1_+.
\label{transience}
\end{align}
While  recurrence of $T_t$ is equivalent to either of the following conditions:
\begin{align}
Gg =\infty\ \text{ for every nonnegative }\ g\in L^1_+,
\label{recurrence1}
\end{align}
or
\begin{align}
 \text{ there is some }\  f\in L^1_+\ \text{ such that }\  Kf=\infty.
\label{recurrence2}
\end{align}
For the readers who are interested to recurrence an transience  in the non-symmetric context we refer to \cite{Beznea}.\\
Let $\rho$ be a fixed  nonnegative function from $L^1_+$ (which exists by the $\sigma$-finiteness of $m$). Then, according to \cite[Lemma 1.6.2, p.54]{Fukushima} the sets
\begin{align*}
X_{\rec}:=\{x\colon\,K\rho(x)=\infty \},\ X_{\trans}:=\{x\colon\,K\rho(x)<\infty \},
\end{align*}
are independent from the choice of the function $\rho$. Moreover they are both invariant.\\
The following result, save uniqueness, was proved by Kuwae in \cite[Theorem 1.3, 1.4]{Kuwae} for non-symmetric quasi-regular semi-Dirichlet forms, in the framework of metric spaces. Let us stress unlike Kuwae, we do not assume quasi-regularity of the considered Dirichlet forms. Furthermore our method deviates from Kuwae's method, especially concerning the recurrent part.
\begin{theo}
Let $\calE$ be a Dirichlet form. Then there are unique quadratic forms $\calE^{\rec}$ and $\calE^{\trans}$ such that
\begin{enumerate}
\item The forms  $\calE^{\rec}$ and $\calE^{\trans}$ are Dirichlet forms in the wide sense in $L^2$.
\item  $\calE^{\rec}$ is recurrent in $L^2(X_{\rec},m|_{X_{\rec}})$ whereas   $\calE^{\trans}$ is transient in $L^2(X_{\trans},m|_{X_{\trans}})$ and
\begin{eqnarray}
\calE[u]=\calE^{\rec}[u] + \calE^{\trans}[u],\ \forall\,u\in\calD.
\end{eqnarray}
\end{enumerate}
\label{Zerlegung1}
\end{theo}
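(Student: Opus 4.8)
The plan is to use the invariance of the sets $X_{\rec}$ and $X_{\trans}$, which are complementary, together with Lemma~\ref{PartOfE}. Since $X=X_{\rec}\cup X_{\trans}$ disjointly and both are $\calE$-invariant, Lemma~\ref{PartOfE} gives that for every $u\in\calD$ the functions $1_{X_{\rec}}u$ and $1_{X_{\trans}}u$ lie in $\calD$ and
\[
\calE[u]=\calE[1_{X_{\rec}}u]+\calE[1_{X_{\trans}}u].
\]
So the natural candidates are $\calE^{\rec}:=\calE^{X_{\rec}}$ and $\calE^{\trans}:=\calE^{X_{\trans}}$, i.e.\ $\calE^{\rec}[u]=\calE[1_{X_{\rec}}u]$ and $\calE^{\trans}[u]=\calE[1_{X_{\trans}}u]$, both with domain $\calD$. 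Assertion (a) is then immediate from the last statement of Lemma~\ref{PartOfE}: each is a Dirichlet form in the wide sense on $L^2$ and a genuine Dirichlet form on $L^2(X_{\rec},m|_{X_{\rec}})$, resp.\ $L^2(X_{\trans},m|_{X_{\trans}})$, and the decomposition identity is exactly the displayed equation above.

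For assertion (b) I would identify the semigroups of $\calE^{\rec}$ and $\calE^{\trans}$ with the restrictions of $(T_t)$ to the invariant sets, i.e.\ $T_t^{\rec}=T_t\!\restriction_{L^2(X_{\rec})}$ and similarly on $X_{\trans}$ — this is standard for parts of Dirichlet forms on invariant sets, and follows from $T_t(1_Y u)=1_Y T_t u$. Transience of $\calE^{\trans}$ is then essentially the definition of $X_{\trans}$: the function $G^{\trans}\rho = 1_{X_{\trans}}K\rho = K\rho\!\restriction_{X_{\trans}}$ is finite $m$-a.e.\ on $X_{\trans}$ by construction of $X_{\trans}$, and $\rho\!\restriction_{X_{\trans}}$ is a strictly positive $L^1$ function on $X_{\trans}$, so \eqref{transience} holds for the part on $X_{\trans}$. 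For recurrence of $\calE^{\rec}$ I would use the characterization \eqref{recurrence1}: on $X_{\rec}$ one has $K\rho=\infty$ $m$-a.e.\ by definition, and since the set $X_{\rec}$ is invariant and already "purely recurrent" (adding any further invariant splitting into $\{K f=0\}$ vs $\{K f=\infty\}$ pieces is consistent with recurrence), one argues that for every $f\in L^1_+(X_{\rec})$ the Green function $Kf$ restricted to $X_{\rec}$ is $\infty$ on $\{f>0\}$-charged invariant components and $0$ elsewhere; invoking \eqref{recurrence1} on each invariant piece and the independence of $X_{\rec}$ from the choice of $\rho$ gives that $\calE^{\rec}$ is recurrent on $L^2(X_{\rec},m|_{X_{\rec}})$. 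A cleaner route is to note that on $X_{\rec}$, $K\rho=\infty$ already forces recurrence via \eqref{recurrence2} applied to the restricted semigroup, since $\rho\!\restriction_{X_{\rec}}\in L^1_+(X_{\rec})$ and $K^{\rec}(\rho\!\restriction_{X_{\rec}})=\infty$.

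For uniqueness, suppose $\calE=\calF_1+\calF_2$ is another such decomposition with $\calF_1$ recurrent on some $L^2(A,m|_A)$ and $\calF_2$ transient on $L^2(A^c,m|_{A^c})$ for a measurable (necessarily invariant, by Lemma~\ref{PartOfE} applied to the summands) set $A$. The key is to show $A=X_{\rec}$ up to $m$-null sets. Since $A$ is invariant for $\calE$, the restricted semigroup on $A$ is recurrent, which by \eqref{recurrence2} forces $K\rho=\infty$ $m$-a.e.\ on $A$, whence $A\subseteq X_{\rec}$; symmetrically transience of the part on $A^c$ forces, via \eqref{transience}, $K\rho<\infty$ $m$-a.e.\ on $A^c$, so $A^c\subseteq X_{\trans}$, and together $A=X_{\rec}$, $A^c=X_{\trans}$. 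Then $\calF_1[u]=\calE[1_{X_{\rec}}u]=\calE^{\rec}[u]$ and likewise for $\calF_2$.

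I expect the main obstacle to be the recurrence of $\calE^{\rec}$: unlike transience, which reads off directly from the definition of $X_{\trans}$, recurrence is a statement about \emph{all} $f\in L^1_+(X_{\rec})$, not just the fixed reference $\rho$. The subtle point is to pass from "$K\rho=\infty$ a.e.\ on $X_{\rec}$" to the dichotomy "$Kf\in\{0,\infty\}$ for all $f\in L^1_+(X_{\rec})$" for the restricted semigroup; this requires knowing that the restriction of $(T_t)$ to the invariant set $X_{\rec}$ has no further transient piece, which is exactly the content of the maximality of $X_{\trans}$ (equivalently minimality of $X_{\rec}$) and the independence of these sets from $\rho$ recorded just before the theorem. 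I would make this rigorous by applying \eqref{recurrence2} to the part on $X_{\rec}$ with the test function $\rho\!\restriction_{X_{\rec}}$, since that characterization only needs \emph{one} witnessing function, sidestepping the need to verify the full dichotomy by hand.
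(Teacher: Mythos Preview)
Your proposal is correct and follows essentially the same route as the paper: define $\calE^{\rec}=\calE^{X_{\rec}}$, $\calE^{\trans}=\calE^{X_{\trans}}$ via Lemma~\ref{PartOfE}, verify transience on $X_{\trans}$ directly from $K\rho<\infty$ there, and verify recurrence on $X_{\rec}$ by applying \eqref{recurrence2} to the single witness $\rho|_{X_{\rec}}$ (exactly the ``cleaner route'' you settle on). The only difference is organizational: the paper packages the uniqueness argument as a separate maximality statement (Theorem~\ref{maximal}) rather than arguing it inline as you do, and in your inline version the step ``recurrent on $A$ forces $K\rho=\infty$ on $A$'' should invoke \eqref{recurrence1} rather than \eqref{recurrence2}.
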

\begin{proof}
Owing to the invariance of the sets $X_{\rec}$ and $X_{\trans}$ together with Lemma \ref{PartOfE}, the quadratic forms $\calE^{X_{rec}}$ and $\calE^{X_{trans}}$ are Dirichlet forms respectively in $L^2(X_{\rec},m|_{X_{\rec}})$ and $L^2(X_{\trans},m|_{X_{\trans}})$. Moreover  $\calE[u]=\calE^{X_{\rec}}[u] + \calE^{X_{\trans}}[u]$ for each $u\in\calD$.\\
Let us prove that $\calE^{\trans}:=\calE^{X_{\trans}}$ is transient as a Dirichlet form in $L^2(X_\trans,m|_{X_\trans})$. Set $K_{\trans}$ the kernel of the Dirichlet form $\calE^{\trans}$. Then the invariance of $X_\trans$ leads to  $K_{\trans} = 1_{X_{\trans}} K|_{L^2( X_{\trans},m|_{X_{\trans}})}$. Let $f=\rho|_{X_{\trans}}$. Then $f\in L^1_+( X_{\trans},m|_{X_{\trans}})$ and
\[
K_{\trans} f = 1_{X_{\trans}} K|_{L^2( X_{\trans},m|_{X_{\trans}})} f = 1_{X_{\trans}}K\rho<\infty.
\]
Hence by (\ref{transience}) $\calE^{\trans}$ is transient in $L^2(X_\trans,m|_{X_\trans})$.\\
Let $\calE^{\rec}:=\calE^{X_{\rec}}$ and let $K_{\rec}$ be the kernel of the Dirichlet form $\calE^{\rec}$. As before the invariance of the set $X_{\rec}$ leads to $K_{\rec} = 1_{X_{\rec}} K|_{L^2( X_{\rec},m|_{X_{\rec}})}$. Thus, setting $f=\rho|_{X_{\rec}}$ we obtain $K_{\rec}f=\infty$. Finally making use of the condition (\ref{recurrence2}) we conclude the recurrence of  $\calE^{\rec}$ in $L^2(X_\rec,m|_{X_\rec})$.\\
Uniqueness follows from Theorem \ref{maximal}.
\end{proof}
\begin{rk}
\begin{enumerate}
\item As a consequence of Theorem \ref{Zerlegung1} we get, $\calE$ is recurrent, respectively transient, if and only if $\calE=\calE^{\rec}$ in the sense that $\calE^\rec$ is a Dirichlet form in $L^2$ and both forms coincide pointwise (equivalently $X=X_{\rec}$-$m$ a.e.), respectively $\calE=\calE^{\trans}$ (equivalently $X=X_{\trans}$-$m$ a.e.).
\item Formula (\ref{Zerlegung1}) is very sensitive to small perturbations. Indeed, let $\calE$ be recurrent. Then $\calE_\epsilon = \calE +\epsilon\|\cdot\|^2,\ \epsilon>0$ is however transient and converges to $\calE$ pointwise (and in the sense of Mosco).
\end{enumerate}
\end{rk}
\begin{prop}
Let $Y\subset X$ be an invariant set. Assume that $\calE$ is transient, resp. recurrent. Then so is $\calE^Y$ in $L^2(Y,m|_Y)$.
\label{PartDirichlet}
\end{prop}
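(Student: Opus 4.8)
The plan is to transfer the transience and recurrence criteria from $\calE$ to $\calE^Y$ via an identification of the associated Green (potential) operators.

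First I would record the structural input. By Lemma~\ref{PartOfE}, $\calE^Y$ is a genuine Dirichlet form in $L^2(Y,m|_Y)$; since $(Y,m|_Y)$ is again $\sigma$-finite, all the constructions recalled at the start of this section (the averaging operators $S_t$, their $L^1$-extensions, the resolvent $K_\alpha$, and the monotone limits $K=\lim_nS_n=\lim_nK_{1/n}$) apply to $\calE^Y$ verbatim. Identifying $L^2(Y,m|_Y)$ with $\{u\in L^2:\ u=0\text{ on }Y^c\}$, invariance of $Y$ says that $T_t$ and $K_\alpha$ commute with multiplication by $1_Y$; by the density argument (built on $\sigma$-finiteness and the Markov property) already used to extend $S_t$ and $K_\alpha$ to $L^1$, these commutation relations carry over to $L^1$. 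Hence, for $v\in L^1_+$ supported on $Y$, $T_tv$ is again supported on $Y$, and the restriction of $T_t$ to $L^2(Y,m|_Y)$ is exactly the semigroup $T_t^Y$ of $\calE^Y$; likewise $S_t^Y$ and $K_\alpha^Y$ are the restrictions of $S_t$ and $K_\alpha$. Passing to the increasing limits — legitimate because the approximants are monotone on $L^1_+$ and commute with $1_Y$ — yields the key identity
\[
K^Yf=1_Y\,K\bar f,\qquad f\in L^1_+(Y,m|_Y),
\]
where $\bar f\in L^1_+(X,m)$ denotes the extension of $f$ by $0$ and $K^Y$ is the Green operator of $\calE^Y$.

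Then I would apply this with $f:=\rho|_Y$, $\rho$ being the fixed reference function. Here $\bar f=1_Y\rho$, and invariance of $Y$ gives $K\bar f=K(1_Y\rho)=1_YK\rho$, so $K^Yf=1_YK\rho$. If $\calE$ is transient, then $X=X_\trans$ $m$-a.e.\ (Remark after Theorem~\ref{Zerlegung1}), i.e.\ $K\rho<\infty$ $m$-a.e., whence $K^Yf<\infty$ $m|_Y$-a.e., and $\calE^Y$ is transient in $L^2(Y,m|_Y)$ by the criterion~(\ref{transience}). If $\calE$ is recurrent, then $X=X_\rec$ $m$-a.e., i.e.\ $K\rho=\infty$ $m$-a.e., whence $K^Yf=\infty$ $m|_Y$-a.e., and $\calE^Y$ is recurrent by the criterion~(\ref{recurrence2}).

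The one genuinely substantive point is the displayed identity $K^Yf=1_YK\bar f$ — the fact that the semigroup, the operators $S_t$, the resolvent, and their monotone limits attached to the part form $\calE^Y$ are literally the restrictions to $L^2(Y,m|_Y)$ of the corresponding objects for $\calE$. I expect this to be the only place requiring care, and it rests solely on the invariance of $Y$ together with the $L^1$-extension and monotone-limit machinery already in place for $\calE$, so no new estimate is needed. Everything after that is a direct reading of the transience and recurrence criteria for the Dirichlet form $\calE^Y$ on the $\sigma$-finite space $(Y,m|_Y)$.
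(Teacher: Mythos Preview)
Your argument is correct and is precisely the natural fleshing-out of the proof of Theorem~\ref{Zerlegung1}: identify the Green operator of the part form as $K^Yf=1_YK\bar f$ via invariance and the $L^1$-extension machinery, then read off transience/recurrence from the criteria (\ref{transience}) and (\ref{recurrence2}) applied with $f=\rho|_Y$. The paper itself does not give a proof at all---it simply writes ``The proof is easy, so we omit it''---so there is nothing to compare against beyond noting that your approach is exactly the one implicit in the surrounding material.
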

\begin{proof}
The proof is easy, so we omit it.
\end{proof}
The later proposition suggests characterization of the sets $X_{\trans}$ and  $X_{\rec}$ by means of invariant sets in the following way. Let
\begin{align}
{\cal{T}}^{\trans}&:=\{ Y\subset X\colon Y\ \text{is measurable},\  Y \text{ is } T_t-\text{ invariant and }\nonumber\\
& \calE^Y \text{ is } L^2(Y,m|_Y)-\text{ transient }\}
\end{align}
and
\begin{align}
{\cal{R}}^{\rec}&:=\{ Y\subset X\colon Y\ \text{is measurable},\  Y \text{ is } T_t-\text{ invariant and }\nonumber\\
& \calE^Y \text{ is } L^2(Y,m|_Y)-\text{ recurrent }\}.
\end{align}
%
%
%
%
\begin{theo}
The set  $X_{\trans}$, respectively $X_{\rec}$  is the largest element of ${\cal{T}}^{\trans}$, respectively ${\cal{R}}^{\rec}$.
\label{maximal}
\end{theo}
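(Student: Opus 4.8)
The plan is to show two things for the transient case (the recurrent case being completely symmetric): first, that $X_{\trans}\in{\cal{T}}^{\trans}$, and second, that every $Y\in{\cal{T}}^{\trans}$ satisfies $Y\subset X_{\trans}$ up to an $m$-null set. Membership $X_{\trans}\in{\cal{T}}^{\trans}$ is essentially already recorded: $X_{\trans}$ is measurable, it is $T_t$-invariant (stated just before Theorem \ref{Zerlegung1}, citing \cite[Lemma 1.6.2]{Fukushima}), and $\calE^{\trans}=\calE^{X_{\trans}}$ was shown to be transient in $L^2(X_{\trans},m|_{X_{\trans}})$ in the proof of Theorem \ref{Zerlegung1}. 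So the real content is maximality.

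For maximality, let $Y\in{\cal{T}}^{\trans}$. The key tool will be the identity for kernels of parts of $\calE$ on invariant sets that was used in the proof of Theorem \ref{Zerlegung1}: for an invariant set $Y$, the Green kernel of $\calE^Y$ satisfies $K_Y = 1_Y\,K|_{L^2(Y,m|_Y)}$, because the operators $S_t$ (hence their limit $K$) commute with multiplication by $1_Y$. Concretely, for the fixed reference function $\rho\in L^1_+$, putting $f=\rho|_Y\in L^1_+(Y,m|_Y)$ gives $K_Y f = 1_Y\,K\rho$. Now transience of $\calE^Y$ means (via the characterization \eqref{transience}) that there is \emph{some} $g\in L^1_+(Y,m|_Y)$ with $K_Y g<\infty$; but the standard domination argument — any $g\in L^1_+(Y)$ is dominated by a multiple of a strictly positive $L^1$-function and conversely, together with the fact that the finiteness set of $K$ applied to a strictly positive $L^1$ function is independent of the function (again \cite[Lemma 1.6.2]{Fukushima}) — upgrades this to $K_Y(\rho|_Y)<\infty$ $m$-a.e.\ on $Y$. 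Hence $1_Y\,K\rho<\infty$ $m$-a.e., which is to say $K\rho(x)<\infty$ for $m$-a.e.\ $x\in Y$, i.e.\ $Y\subset X_{\trans}$ up to an $m$-null set. Since elements of ${\cal{T}}^{\trans}$ are identified $m$-a.e., this exhibits $X_{\trans}$ as the largest element.

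The recurrent case runs the same way with $K_Y(\rho|_Y)=1_Y K\rho=\infty$ on $Y$ and the characterization \eqref{recurrence2}: recurrence of $\calE^Y$ forces $K_Y$ applied to some (hence, by the same independence-of-reference-function fact, to $\rho|_Y$) to be infinite $m$-a.e.\ on $Y$, so $K\rho=\infty$ $m$-a.e.\ on $Y$ and $Y\subset X_{\rec}$. I expect the main obstacle to be the bookkeeping in the upgrade step: justifying that transience/recurrence witnessed by an \emph{arbitrary} $g\in L^1_+(Y,m|_Y)$ can be transferred to the specific restriction $\rho|_Y$, i.e.\ that the dichotomy "$K_Y(\rho|_Y)<\infty$ a.e." versus "$K_Y(\rho|_Y)=\infty$ on the support" is forced by transience versus recurrence. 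This is exactly \cite[Lemma 1.6.2]{Fukushima} applied inside $L^2(Y,m|_Y)$, combined with the kernel identity $K_Y=1_Y K|_{L^2(Y,m|_Y)}$; once that is in place the set-inclusion conclusion is immediate, and uniqueness in Theorem \ref{Zerlegung1} then follows since $X_{\trans}$ and $X_{\rec}$, being the maximal transient and recurrent invariant sets, are determined by $\calE$ alone.
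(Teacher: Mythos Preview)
Your proof is correct and follows the same route as the paper, just with the details filled in: the paper's own proof merely asserts ``for any $Y\in{\cal{T}}^{\trans}$ it holds $Y\subset X_{\trans}$'' and observes $X_{\trans}\in{\cal{T}}^{\trans}$, without writing out the justification for the inclusion. Your argument via the kernel identity $K_Y=1_Y K$ and the independence-of-reference-function fact (Lemma~1.6.2 of \cite{Fukushima}, applied inside $L^2(Y,m|_Y)$) is exactly what is needed to unpack that assertion.
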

\begin{proof}
We prove only the first part of the theorem, the corresponding conclusion for the recurrent case runs similarly.\\
Let us observe, for any $Y\in{\cal{T}}^{\trans}$ it holds $Y\subset X_{\trans}$. As $X_{\trans}\in {\cal{T}}^{\trans}$, we are done.
\end{proof}
Henceforth, we turn our attention to write any Dirichlet form as the sum of a conservative and a dissipative Dirichlet forms. We shall adopt the standard definitions for the concepts of conservativeness and dissipation. Precisely,  we shall name a Dirichlet form $\calE$ (or its related semigroup $T_t$) dissipative if for some $t>0$ it holds
\begin{align}
m(\{T_t1<1\})>0.
\label{ConservSet}
\end{align}
A Dirichlet form $\calE$ (or its related semigroup $T_t$) is called conservative if
\begin{align}
T_t1 = 1\ \text{ for some and hence every}\ t>0.
\end{align}
The latter is equivalent to
\begin{align}
\alpha K_\alpha 1 = 1\ \text{ for some and hence every}\ \alpha>0.
\end{align}
%
%
We set
\begin{align}
X_{\cons}:=\cap_{t>0}\{T_t1=1\} \text{ and } X_{\diss}:= X\setminus X_{\cons}.
\end{align}
Obviously $\calE$ is dissipative if and only if $m(X_{\diss})>0$, whereas it is conservative if and only if $m(X_{\diss})=0$. Moreover it holds
\begin{align}
X_{\cons}=\cap_{k\in\N}\{T_k1=1\} \text{ and } X_{\diss}:= \cup_{k\in\N} \{T_k1<1\}.
\label{reecriture}
\end{align}
For each $t>0$ let us set
\begin{align*}
E_t:=\{x\colon\,T_t1(x)=1\}.
\end{align*}
A crucial step towards proving the already described decompositions is to prove invariance of the sets $X_\cons,X_\diss$.
\begin{lem}
The sets $X_\cons$ and $X_\diss$ are invariant. Moreover, it holds $X_{\diss}\subset X_{\trans}$ whereas  $X_{\rec}\subset X_{\cons}$.
\label{inclusion}
\end{lem}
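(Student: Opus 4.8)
The plan is to establish, in order: first that $X_{\diss}$ (equivalently its complement $X_{\cons}$) is invariant; second that $X_{\diss}\subseteq X_{\trans}$; and then $X_{\rec}\subseteq X_{\cons}$ follows for free by complementation, since the pairs $(X_{\cons},X_{\diss})$ and $(X_{\trans},X_{\rec})$ each partition $X$, so $X_{\diss}\subseteq X_{\trans}\iff X_{\rec}\subseteq X_{\cons}$.

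\emph{Invariance.} First I note that $t\mapsto T_t1$ is decreasing, because $T_{t+r}1=T_t(T_r1)\le T_t1$ for $r>0$ (here $T_r1\le1$ and $T_t$ is positivity preserving). By \eqref{reecriture} only integer times matter, so $\ell:=\lim_{t\to\infty}T_t1=\inf_{k\in\N}T_k1$ is a well-defined element of $L^\infty$ with $0\le\ell\le1$, and $X_{\cons}=\{\ell=1\}$, $X_{\diss}=\{\ell<1\}$. Writing $T_t1=1-g_t$ with $g_t:=1-T_t1\uparrow1-\ell$, the monotone continuity and the semigroup law of the $L^\infty$-extension of $T_s$ give $T_sg_t\uparrow T_s(1-\ell)$ and $T_sT_t1=T_{s+t}1$; letting $t\to\infty$ in $T_{s+t}1=T_s1-T_sg_t$ and using $T_{s+t}1\to\ell$ yields $T_s1-T_s(1-\ell)=\ell$, that is $T_s\ell=\ell$, for every $s>0$. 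On $X_{\cons}$ this gives $T_s(1-\ell)=T_s1-\ell=0$; since $1-\ell>0$ exactly on $X_{\diss}$, the increasing identity $1_{X_{\diss}}=\lim_n\bigl(n(1-\ell)\bigr)\wedge1$ and monotonicity of $T_s$ give $T_s\bigl(\bigl(n(1-\ell)\bigr)\wedge1\bigr)\le nT_s(1-\ell)=0$ on $X_{\cons}$ for every $n$, hence $T_s1_{X_{\diss}}=0$ on $X_{\cons}$. By symmetry of $T_s$ (valid for its $L^\infty$-extension tested against nonnegative measurable functions), $\int_{X_{\diss}}T_s1_{X_{\cons}}\,dm=\int_{X_{\cons}}T_s1_{X_{\diss}}\,dm=0$, so $T_s1_{X_{\cons}}=0$ on $X_{\diss}$ as well. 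Together with $1_{X_{\cons}}+1_{X_{\diss}}=1$ this forces $T_s1_{X_{\diss}}=1_{X_{\diss}}T_s1$ and $T_s1_{X_{\cons}}=1_{X_{\cons}}T_s1$, i.e. both sets are invariant.

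\emph{The inclusion $X_{\diss}\subseteq X_{\trans}$.} From the monotonicity of $s\mapsto T_s1$ one checks that $X_{\diss}=\{K_11<1\}$, so $w:=1-K_11$ obeys $w\ge0$, $w>0$ $m$-a.e.\ on $X_{\diss}$ and $w=0$ on $X_{\cons}$. Writing $w=\int_0^\infty e^{-u}(1-T_u1)\,du$, a short Tonelli computation gives $S_nw\le\int_0^\infty ue^{-u}\,du=1$ for all $n$, hence $Kw\le1$ on $X$. Since $X_{\diss}$ is invariant and $w\,1_{X_{\diss}}=w$, the potential $\widehat K$ of the part $\calE^{X_{\diss}}$ on $L^2(X_{\diss},m|_{X_{\diss}})$ satisfies $\widehat K\bigl(w|_{X_{\diss}}\bigr)=(Kw)|_{X_{\diss}}\le1$. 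Then $\phi:=(w\wedge\rho)|_{X_{\diss}}$ lies in $L^1_+(X_{\diss},m|_{X_{\diss}})$ (it is $m$-a.e.\ positive on $X_{\diss}$ and dominated by $\rho$) and $\widehat K\phi\le\widehat K\bigl(w|_{X_{\diss}}\bigr)\le1<\infty$, so $\calE^{X_{\diss}}$ is transient. As $X_{\diss}$ is an invariant set, Theorem \ref{maximal} forces $X_{\diss}\subseteq X_{\trans}$; complementing gives $X_{\rec}=X\setminus X_{\trans}\subseteq X\setminus X_{\diss}=X_{\cons}$.

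The step I expect to be delicate is the invariance part: the functions $1$, $\ell$ and $1_{X_{\diss}}$ lie in $L^\infty$ but not in $L^2$, so every semigroup manipulation has to be read for the $L^\infty$-extension of $T_t$ and must use only its available properties — monotone continuity along $\varphi_k\uparrow1$, the semigroup law, and symmetry against nonnegative measurable functions, all recalled in Section 2. Once $T_s1_{X_{\diss}}=0$ on $X_{\cons}$ is secured, the invariance of both sets and the two inclusions are immediate.
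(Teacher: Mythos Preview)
Your proof is correct, and it takes a genuinely different route from the paper's on both halves of the lemma.

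\textbf{Invariance.} The paper tests $T_t(1_{X_{\cons}}u)$, for $u\in L^2_+$, against the weight $1-T_s1$ and uses the duality $\int (1-T_s1)\,T_t(1_{X_{\cons}}u)\,dm=\int_{X_{\cons}}(T_t1-T_{s+t}1)u\,dm=0$ to conclude that $T_t(1_{X_{\cons}}u)$ vanishes on each $\{T_s1<1\}$ and hence on $X_{\diss}$. Your argument instead stays in $L^\infty$: you introduce the $T_s$-invariant function $\ell=\lim_t T_t1$, deduce $T_s(1-\ell)=0$ on $\{\ell=1\}=X_{\cons}$, and pass to $1_{X_{\diss}}$ via the monotone approximation $(n(1-\ell))\wedge 1$. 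Both arguments are valid; yours makes the structural reason (existence of a bounded $T_s$-invariant function with level set $X_{\cons}$) more visible, at the cost of having to justify linearity and monotone continuity of the $L^\infty$-extension, which you do.

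\textbf{Inclusion $X_{\diss}\subset X_{\trans}$.} The paper appeals to the representation of Lemma~\ref{KernelRep} to obtain a bound of the form $\int(1-T_t1)(K_{1/n}g)^2\,dm\le C$ uniformly in $n$, and lets $n\to\infty$ to get $Kg<\infty$ on $X_{\diss}$. You instead construct an explicit test function $w=1-K_11\in L^\infty_+$ supported on $X_{\diss}$ and show by a direct Tonelli computation that $S_nw\le\int_0^\infty ue^{-u}\,du=1$, hence $Kw\le1$; transience of $\calE^{X_{\diss}}$ then follows from $\phi=(w\wedge\rho)|_{X_{\diss}}\in L^1_+(X_{\diss})$, and the inclusion from the maximality of $X_{\trans}$ (Theorem~\ref{maximal}). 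Your route avoids Lemma~\ref{KernelRep} entirely and yields the sharp bound $Kw\le 1$ rather than a qualitative finiteness statement; it does invoke Theorem~\ref{maximal}, which in the paper appears after this lemma, but that result is logically independent of Lemma~\ref{inclusion}, so there is no circularity.

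One small point worth spelling out when you write this up: the operators $S_n$ and $K$ in your inclusion argument act on $w\in L^\infty_+$ rather than on $L^1_+$, so you are implicitly using the natural monotone extension of $S_n$ (and hence of $K$) to nonnegative bounded functions. This is harmless and follows from the same $\varphi_k\uparrow 1$ device recalled in Section~2, but it deserves a sentence.
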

\begin{proof}
Owing to formula (\ref{reecriture}) both sets $X_{\cons},X_{\diss}$ are measurable. Let $u\in L^2$. Without loss of generality we may and shall assume that $u$ is positive. Regarding the symmetry of  $T_t$ together with the $\sigma$-finiteness of $m$ and the definitions of $X_{\cons},X_{\diss}$, we obtain  for all $s,t>0$
\begin{align*}
\int_X(1-T_s1)T_t(1_{X_{\cons}}u)\,dm &=\int_{X{_\cons}}(1-T_s1)T_t(1_{X_{\cons}}u)\,dm + \int_{X_{\diss}}(1-T_s1)T_t(1_{X_{\cons}}u)\,dm \\
&= \int_{X_{\diss}}(1-T_s1)T_t(1_{X_{\cons}}u)\,dm\\
&=\int_X T_t((1-T_s1))\cdot 1_{X_{\cons}}u\,dm=\int_{X_{\cons}} (T_t1 - T_{s+t}1)u=0.
\end{align*}
Hence for each fixed $t>0$ we get  $T_t(1_{X_{\cons}}u)=0$ on every set $E_s$. Having the definition of $X_\diss$ in mind we get  $T_t(1_{X_{\cons}}u)=0$ on  $X_{\diss}$. Thus $X_{\cons}$ is invariant and so is $X_{\diss}$.\\
Let us prove the remainder of the lemma. Clearly the second inclusion follows from the first one and we are simply lead to show $X_{\diss}\subset X_{\trans}$. Let $g\in L^1_+$. Changing $g$ by $g\wedge l$ we may and shall assume that $g\in L^1\cap L^\infty$ and hence $g\in L^2\cap L^\infty$. From Lemma \ref{KernelRep}, together with (\ref{monotoneLim}) we obtain
\[
\int_X (1-T_t1)(K_{1/n}g)^2\,dm\leq\calEt[K_{1/n}g]\leq\calE[K_{1/n}g]\leq\int_X g^2\,dm<\infty,\ \forall\,t>0.
\]
Thus for every $t>0$ we have
\[
\int_X (1-T_t1)(K_{1/n}g)^2\,dm =\int_{X_{\diss}} (1-T_t1)(K_{1/n}g)^2\,dm<\infty.
\]
Letting $n\to\infty$, we obtain
\[
\int_{X_{\diss}} (1-T_t1)(Kg)^2\,dm<\infty,\ \forall\,t>0
\]
and hence $Kg<\infty$ on $X_{\diss}$, showing that $X_{\diss}\subset X_{\trans}$.

\end{proof}
\begin{rk}
On the light of Lemma \ref{inclusion} together with the respective  definitions of the sets $X_{\trans},X_{\rec}, X_{\cons}$ and $X_{\diss}$ it holds
\[
X= X_{\rec} \cup (X_{\trans}\setminus X_{\diss}) \cup X_{\diss},
\]
with $a.e.$ disjoint union. This is a refinement of \cite[Theorem 1.3]{Kuwae} in our framework.
\label{SpaceDecomp}
\end{rk}
We are in position now to give the decomposition of a Dirichlet form into a conservative and a dissipative (non-conservative) part. The decomposition is motivated by the fact that there are  much more conservative than recurrent Dirichlet forms and much more transient than dissipative Dirichlet forms.
\begin{theo}{(The second decomposition)}
Let $\calE$ be a Dirichlet form. Then there are unique quadratic forms $\calE^{\cons}$ and $\calE^{\diss}$ with respective domains $\calD$ such that
\begin{enumerate}
\item The forms $\calE^{\cons}$ and $\calE^{\diss}$ are Dirichlet forms in the wide sense in $L^2$.
\item The form $\calE^{\cons}$ is conservative in $L^2(X_{\cons},m|_{X_{\cons}})$ whereas  $\calE^{\diss}$ is dissipative in $L^2(X_{\diss},m|_{X_{\diss}})$ and
\begin{eqnarray}
\calE[u]=\calE^{\cons}[u] + \calE^{\diss}[u],\ \forall\,u\in\calD.
\end{eqnarray}
\end{enumerate}
\label{Zerlegung2}
\end{theo}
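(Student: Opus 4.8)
The plan is to mimic the proof of Theorem~\ref{Zerlegung1}, now using the invariant sets $X_{\cons}$ and $X_{\diss}$ furnished by Lemma~\ref{inclusion} in place of $X_{\rec}$ and $X_{\trans}$. First I would set $\calE^{\cons}:=\calE^{X_{\cons}}$ and $\calE^{\diss}:=\calE^{X_{\diss}}$, the parts of $\calE$ on these sets in the sense of Lemma~\ref{PartOfE}, both with domain $\calD$. Since $X_{\cons}$ and $X_{\diss}=X\setminus X_{\cons}$ are invariant, that lemma immediately gives assertion~(a) (both are Dirichlet forms in the wide sense in $L^2$, and are in fact Dirichlet forms in $L^2(X_{\cons},m|_{X_{\cons}})$, resp. $L^2(X_{\diss},m|_{X_{\diss}})$) together with the additive identity $\calE[u]=\calE^{\cons}[u]+\calE^{\diss}[u]$ for $u\in\calD$. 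Thus everything reduces to the conservativeness/dissipativity claims in~(b).

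For the conservativeness of $\calE^{\cons}$, I would first record that, by invariance of $X_{\cons}$, the $L^2$-semigroup $T_t$ leaves $L^2(X_{\cons},m|_{X_{\cons}})$ (functions on $X$ vanishing off $X_{\cons}$) invariant and restricts there to the semigroup of $\calE^{\cons}$; the same holds for the $L^\infty$-extensions. Feeding the constant function $1$ into this extended semigroup and using the invariance relation $T_t1_{X_{\cons}}=1_{X_{\cons}}T_t1$, one gets $T_t^{\cons}1_{X_{\cons}}=1_{X_{\cons}}T_t1$, which equals $1_{X_{\cons}}$ on $X_{\cons}$ by the very definition $X_{\cons}=\cap_{t>0}\{T_t1=1\}$. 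Hence $\calE^{\cons}$ is conservative in $L^2(X_{\cons},m|_{X_{\cons}})$.

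For the dissipativity of $\calE^{\diss}$ I may assume $m(X_{\diss})>0$, the case $m(X_{\diss})=0$ being trivial ($\calE$ conservative, $X=X_{\cons}$ a.e.). As above, $T_t^{\diss}1_{X_{\diss}}=1_{X_{\diss}}T_t1$ read on $X_{\diss}$. Invoking the rewriting $X_{\diss}=\cup_{k\in\N}\{T_k1<1\}$ from~(\ref{reecriture}) together with countable subadditivity, $m(X_{\diss})>0$ forces $m(\{T_k1<1\})>0$ for some $k$; since $\{T_k1<1\}\subset X_{\diss}$, this means $T_k^{\diss}1_{X_{\diss}}<1$ on a subset of $X_{\diss}$ of positive $m|_{X_{\diss}}$-measure, which is precisely condition~(\ref{ConservSet}) for $\calE^{\diss}$. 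Hence $\calE^{\diss}$ is dissipative in $L^2(X_{\diss},m|_{X_{\diss}})$.

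Uniqueness would follow by the same token as in Theorem~\ref{Zerlegung1}: arguing as in Theorem~\ref{maximal} one checks that $X_{\cons}$ is the largest invariant set $Y$ for which $\calE^{Y}$ is conservative in $L^2(Y,m|_Y)$ (indeed $\calE^{Y}$ conservative forces $T_t1=1$ on $Y$ for every $t>0$, i.e.\ $Y\subset X_{\cons}$), so that the underlying invariant sets of any such decomposition are forced to be $X_{\cons}$ and its complement, whence the two forms must coincide with $\calE^{X_{\cons}}$ and $\calE^{X_{\diss}}$. I do not anticipate a genuine obstacle: the construction is immediate from Lemmata~\ref{PartOfE} and~\ref{inclusion}, and the only points demanding care are the identification of the semigroup of a part of $\calE$ with a restriction of $T_t$ (together with the passage to $L^\infty$, since $1\notin L^2$ in general) and the positivity-of-measure step in the dissipative case.
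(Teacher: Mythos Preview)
Your proposal is correct and follows essentially the same route as the paper: define $\calE^{\cons}=\calE^{X_{\cons}}$ and $\calE^{\diss}=\calE^{X_{\diss}}$ via Lemma~\ref{PartOfE}, invoke the invariance established in Lemma~\ref{inclusion}, identify the restricted semigroups with $1_{X_{\cons}}T_t$ and $1_{X_{\diss}}T_t$ (the paper records this as formula~(\ref{conssg})), and read off conservativeness/dissipativity from the definition of the sets; uniqueness is argued in both cases through the maximality of $X_{\cons}$ among invariant sets carrying a conservative part. Your write-up is in fact slightly more careful than the paper's in two places---you make the countable-subadditivity step via~(\ref{reecriture}) explicit for the dissipative part, and you flag the passage to the $L^\infty$-extension of $T_t$---but the underlying argument is the same.
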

\begin{proof}
{\em Existence:} By Lemma \ref{inclusion}  both $X_{\cons}$ and $X_{\diss}$ are $T_t$-invariant subsets. Hence for every  $u\in\calD$ it holds,  $u=u_c+u_d$, where $u_c:=1_{X_{\cons}}u,\ u_d:=1_{X_{\diss}}u$ and $u_{c,d}\in\calD$. Moreover, $\calE[u]=\calE[u_c]+\calE[u_d]$. Set
\begin{align*}
\calE^{\cons}[u]:=\calE[u_c],\ \calE^{\diss}[u]:=\calE[u_d].
\end{align*}
Then both forms are Dirichlet forms in the wide sense in $L^2$ and are Dirichlet forms respectively in $L^2(X_\cons,m|_{X_\cons})$ and $L^2(X_\diss,m|_{X_\diss})$. Moreover  the decomposition holds true. It remains to prove that $\calE^{\cons}$ is conservative in $L^2(X_\cons,m|_{X_\cons})$ whereas $\calE^{\diss}$ is dissipative in $L^2(X_\diss,m|_{X_\diss})$.\\
Let $T_t^{\cons}$ respectively $T_t^{\diss}$ be the semigroups related respectively to $\calE^{\cons}$ and $\calE^{\diss}$ as Dirichlet forms respectively in $L^2(X_\cons,m|_{X_\cons})$ and $L^2(X_\diss,m|_{X_\diss})$. Owing to the invariance of both sets $X_{\cons}, X_{\diss}$ it holds
\begin{eqnarray}
T_t^{\cons} 1_{X_{\cons}}= 1_{X_{\cons}}T_t 1_{X_{\cons}},\ T_t^{\diss}1_{X_{\diss}} =1_{X_{\diss}}T_t 1_{X_{\diss}}.
\label{conssg}
\end{eqnarray}
The latter identities together with the definitions of the sets $X_{\cons}, X_{\diss}$ lead to  $T_t^{\cons}1_{X_{\cons}}= 1_{X_{\cons}}T_t 1=1_{X_{\cons}}$ for every $t>0$. Hence $\calE_{\cons}$ is conservative as a Dirichlet form in $L^2(X_\cons,m|_{X_\cons})$. Besides it holds $T_t^{\diss} 1_{X_{\diss}}= 1_{X_{\diss}}T_t 1<1_{X_{\diss}}$ for some $t>0$. Accordingly $\calE^{\diss}$ is dissipative as a Dirichlet form in $L^2(X_\diss,m|_{X_\diss})$.\\
{\em Uniqueness:}
Assume there is Dirichlet  forms in the wide sense $Q^c, Q^d$ with domain $\calD$ which are respectively conservative and dissipative on some $T_t$- invariant set $E\subset X$ and $X\setminus E$ such that $\calE[u]=Q^c[u] + Q^d[u]$ for every $u\in\calD$. As $X_{\cons}$ is the largest invariant set on which $T_t$ is conservative whereas $X_{\diss}$ is the largest invariant  set on which $T_t$ is dissipative, we get $E\subset X_{\cons}$ and then $ X\setminus E \subset X_{\diss}\subset X\setminus E$. Thus $ X\setminus E = X_{\diss}$ and hence $E=X_{\cons}$. Besides for any $u\in\calD$ it holds $Q^c[1_{X_c}u]= Q^c[u]=\calE^{\cons}[u],\ Q^d[u]=Q^d[1_{X_d}u]=\calE^{\diss}[u]$ from which follows $\calE^{\cons}=Q^c$ and $\calE^d=Q^{\diss}$.
\end{proof}
\begin{rk}
\begin{enumerate}
\item We shall call $\calE^{\cons}$ the conservative part of $\calE$, while $\calE^{\diss}$ is the dissipative part of $\calE$. Besides, we shall name $X_\cons$, respectively $X_\diss$, the conservative, respectively the dissipative, space of $\calE$.   Let us emphasize that our connotations for conservative and dissipative spaces differ  from those introduced in \cite[p.55]{Fukushima} or \cite{Kuwae}.
\item Assume that $X$ is a locally compact separable metric space and $\calE$ is quasi-regular. In  this case one can associate to $\calE$ a right continuous Markov process. Moreover a.e. properties can be replaced by quasi-everywhere notions. Hence Theorem \ref{Zerlegung1} indicates the possibility of decomposing q.e. the process of $\calE$ into the sum of transient and a recurrent process. Whereas Theorem \ref{Zerlegung2} indicates that the process related to $\calE$ decomposes q.e. into the sum of a process with an infinite lifetime and an other one with a finite lifetime.
\end{enumerate}
\end{rk}
Let us now analyze the relationship between the respective parts of a Dirichlet form.
\begin{prop}
\begin{enumerate}
\item The form $\calE$ is conservative, respectively dissipative if and only if $\calE=\calE^\cons$ (or equivalently $X=X_\cons$ a.e.),  respectively, if and only if $\calE=\calE^\diss$ (or equivalently $X=X_\diss$ a.e.).
\item Every recurrent Dirichlet form is conservative, whereas  every dissipative Dirichlet form is transient.
\item If either $m(X_\cons)<\infty$ or $m(X_\trans)<\infty$ then $\calE^\cons=\calE^\rec$ and $\calE^\diss=\calE^\trans$.
\end{enumerate}
\label{PropCons}
\end{prop}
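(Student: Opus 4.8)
The plan is to deduce all three assertions from the decompositions of Theorems~\ref{Zerlegung1} and~\ref{Zerlegung2}, the inclusions $X_{\rec}\subset X_{\cons}$ and $X_{\diss}\subset X_{\trans}$ of Lemma~\ref{inclusion}, and the elementary fact (noted before~(\ref{reecriture})) that $\calE$ is conservative precisely when $m(X_{\diss})=0$. For~(a), $\calE$ conservative $\iff m(X_{\diss})=0\iff X=X_{\cons}$ a.e., and the last condition trivially gives $\calE^{\cons}[u]=\calE[1_{X_{\cons}}u]=\calE[u]$ on $\calD$, i.e.\ $\calE=\calE^{\cons}$; only the converse needs work. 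If $\calE=\calE^{\cons}$ on $\calD$, then $\calE^{\diss}\equiv 0$ by Theorem~\ref{Zerlegung2}, so (being closed) $\calE^{\diss}$ is the zero form on $L^2(X_{\diss},m|_{X_{\diss}})$, whose semigroup is the identity; thus $T_t^{\diss}1=1$ for all $t$, contradicting the dissipativity asserted in Theorem~\ref{Zerlegung2}(b) unless $m(X_{\diss})=0$, i.e.\ unless $\calE$ is conservative. The dissipative half is symmetric: $\calE=\calE^{\diss}$ forces $\calE^{\cons}\equiv 0$, hence $m(X_{\cons})=0$ by the same argument, while $m(X_{\cons})=0$ plainly gives $\calE=\calE^{\diss}$.

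Assertion~(b) is then immediate. If $\calE$ is recurrent, the remark following Theorem~\ref{Zerlegung1} gives $X=X_{\rec}$ a.e.; combined with $X_{\rec}\subset X_{\cons}$ this yields $X=X_{\cons}$ a.e., so $\calE$ is conservative by~(a). If $\calE$ is dissipative, (a) gives $X=X_{\diss}$ a.e., and $X_{\diss}\subset X_{\trans}$ forces $X=X_{\trans}$ a.e., i.e.\ $\calE$ is transient.

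For~(c), set $M:=X_{\cons}\cap X_{\trans}$. Using the inclusions of Lemma~\ref{inclusion} and the a.e.\ partitions $X=X_{\rec}\sqcup X_{\trans}=X_{\cons}\sqcup X_{\diss}$, one checks $M=X_{\cons}\setminus X_{\rec}=X_{\trans}\setminus X_{\diss}$ (the middle term of Remark~\ref{SpaceDecomp}); hence it suffices to prove $m(M)=0$, for then $1_{X_{\cons}}=1_{X_{\rec}}$ and $1_{X_{\trans}}=1_{X_{\diss}}$ a.e., which gives $\calE^{\cons}=\calE^{\rec}$ and $\calE^{\diss}=\calE^{\trans}$ (all four forms sharing the domain $\calD$). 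Now $M$ is invariant as an intersection of invariant sets, and under either hypothesis $m(M)\le\min\{m(X_{\cons}),m(X_{\trans})\}<\infty$. Since $M\subset X_{\trans}$, Proposition~\ref{PartDirichlet} gives that $\calE^M$ is transient in $L^2(M,m|_M)$; since $M\subset X_{\cons}$, the invariance of $M$ together with $T_t1=1$ on $M$ yields $T_t^M 1_M=1_M T_t 1_M=1_M T_t 1=1_M$, so $\calE^M$ is conservative in $L^2(M,m|_M)$. Conservativeness and $m(M)<\infty$ give $S_n^M 1_M=n\,1_M$, hence $K^M 1_M=\infty$ on $M$ with $1_M\in L^1_+(M,m|_M)$, so $\calE^M$ is recurrent by criterion~(\ref{recurrence2}). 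Finally, a Dirichlet form cannot be simultaneously recurrent and transient on a space of positive measure: applying the remark after Theorem~\ref{Zerlegung1} to $\calE^M$, recurrence forces $M$ to agree a.e.\ with its recurrent part and transience forces it to agree a.e.\ with its transient part, and these are complementary, so $m(M)=0$.

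The only substantive ingredient is the incompatibility used at the end of~(c): a conservative Dirichlet form on a space of finite total mass is recurrent (here a one-line consequence of $T_t^M1_M=1_M$ and~(\ref{recurrence2})), and recurrence excludes transience on a non-null space. All remaining steps are bookkeeping with the inclusions of Lemma~\ref{inclusion} and the uniqueness clauses of Theorems~\ref{Zerlegung1}--\ref{Zerlegung2}; the one point deserving care is that ``$\calE=\calE^{\cons}$'' (resp.\ ``$\calE=\calE^{\diss}$'') in~(a) is to be read as equality on the common domain $\calD$, which is exactly what makes the semigroup argument for the converse implication valid.
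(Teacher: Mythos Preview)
Your arguments for (b) and (c) are correct and follow the same route as the paper, which also uses the inclusions of Lemma~\ref{inclusion} together with the standard fact that conservativeness implies recurrence on finite-measure spaces; your handling of (c) via the single invariant set $M=X_{\cons}\cap X_{\trans}$ is a mild streamlining of the paper's two-case argument, but rests on exactly the same mechanism (the restriction $\calE^{M}$ is simultaneously transient and, being conservative on a finite-mass space, recurrent, forcing $m(M)=0$).

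The dissipative half of (a), however, has a genuine gap. Your ``same argument'' in the conservative case works because $\calE^{\diss}\equiv 0$ forces $T_t^{\diss}1=1$, which \emph{contradicts} the dissipativity clause of Theorem~\ref{Zerlegung2}(b). Transposed verbatim, the argument yields only that $\calE^{\cons}\equiv 0$ forces $T_t^{\cons}1=1$, i.e.\ $\calE^{\cons}$ is conservative --- and this is \emph{consistent} with Theorem~\ref{Zerlegung2}(b), not a contradiction, so you cannot conclude $m(X_{\cons})=0$. Concretely, under your own reading of ``$\calE=\calE^{\diss}$'' as pointwise equality on $\calD$, take the zero form on a space with $0<m(X)<\infty$: then $T_t=I$, $X_{\cons}=X$, $X_{\diss}=\emptyset$, yet $\calE=\calE^{\cons}=\calE^{\diss}=0$ on $\calD=L^2$. (The paper omits the proof of (a) entirely as ``obvious''; note that since ``dissipative'' here means only $m(X_{\diss})>0$, the asserted equivalence with $X=X_{\diss}$ a.e.\ is in any case problematic whenever both $m(X_{\cons})$ and $m(X_{\diss})$ are positive, so the gap in your argument reflects an issue with the statement itself rather than a missing trick.)
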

\begin{proof}
The proof of the first assertion is obvious, so we omit it.\\
(b)-(c): According to Lemma \ref{inclusion}, we have $X_\rec\subset X_\cons$ and $X_\diss\subset X_\trans$. Thus if $\calE$ is recurrent, respectively dissipative we  obtain $X=X_\rec=X_\cons$ and then $\calE$ is conservative, respectively $X=X_\diss=X_\trans$ and then $\calE$ is transient.\\
(c): It is well known that in case $m(X)<\infty$ then recurrence and conservativeness coincide. Thus if $m(X_\cons)<\infty$ then $\calE_\cons$ is recurrent. This leads to $X_\rec=X_\cons$ and hence $X_\diss=X_\trans$, from which the assertion follows. Assume now that $m(X_\trans)<\infty$. According to the first part of the proof,  the conservative part of $\calE^{\trans}$ is recurrent and hence vanishes. Consequently, $\calE^{\trans}=\calE^{\diss}$ yielding the equality $\calE^{\rec}=\calE^{\cons}$.

\end{proof}
\begin{rk}
It may happen that $\calE^\rec\neq\calE^\cons$ or $\calE^\trans\neq\calE^\diss$. For, take the Dirichlet form $\calE$ related to the gradient energy form in $L^2(\R^d,dx)$ with $d\geq 3$, i.e.
\begin{align*}
\calD:= H^1(\R^d),\ \calE[u]=\int_{\R^d}|\nabla u|^2\,dx,\ \forall\,u\in\calD.
\end{align*}
It is well known that $\calE$ is transient and conservative. Thus $\calE^\rec=0=\calE^\diss$ whereas $\calE^\cons =\calE^\trans=\calE$.
\end{rk}
\begin{theo}{(The synthesis)}
Every Dirichlet form decomposes into the sum of a recurrent, dissipative and transient-conservative Dirichlet forms
\label{Zerlegung3}
\end{theo}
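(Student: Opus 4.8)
The name ``synthesis'' already indicates the plan: iterate the two decomposition theorems. First I would apply Theorem \ref{Zerlegung1} to write, for $u\in\calD$, $\calE[u]=\calE^{\rec}[u]+\calE^{\trans}[u]$, with $\calE^{\rec}$ recurrent in $L^2(X_{\rec},m|_{X_{\rec}})$ and $\calE^{\trans}$ transient in $L^2(X_{\trans},m|_{X_{\trans}})$. Then I would apply Theorem \ref{Zerlegung2}, this time to the Dirichlet form $\calE^{\trans}$ viewed in the Hilbert space $L^2(X_{\trans},m|_{X_{\trans}})$: it decomposes as $\calE^{\trans}=(\calE^{\trans})^{\cons}+(\calE^{\trans})^{\diss}$, where $(\calE^{\trans})^{\cons}$ is conservative and $(\calE^{\trans})^{\diss}$ is dissipative on the conservative, resp.\ dissipative, space of $\calE^{\trans}$. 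Combining, $\calE[u]=\calE^{\rec}[u]+(\calE^{\trans})^{\cons}[u]+(\calE^{\trans})^{\diss}[u]$ for all $u\in\calD$, and by Lemma \ref{PartOfE} each summand is a Dirichlet form in the wide sense in $L^2$ and a genuine Dirichlet form on the corresponding invariant piece.

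It then remains to identify these pieces and to check that the middle summand is transient. Using the $L^\infty$-extended semigroup $T_t^{\trans}v=1_{X_{\trans}}T_t\tilde{v}$ of $\calE^{\trans}$ and the invariance of $X_{\trans}$ one gets $T_t^{\trans}1_{X_{\trans}}=1_{X_{\trans}}T_t1$; hence the conservative space of $\calE^{\trans}$ is $X_{\trans}\cap X_{\cons}$ and its dissipative space is $X_{\trans}\cap X_{\diss}=X_{\diss}$, the last equality by Lemma \ref{inclusion}. Consequently $(\calE^{\trans})^{\diss}=\calE^{\diss}$ is the dissipative part of $\calE$, and $X_{\trans}\cap X_{\cons}=X_{\trans}\setminus X_{\diss}$, so this route reproduces exactly the $m$-a.e.\ partition $X=X_{\rec}\cup(X_{\trans}\setminus X_{\diss})\cup X_{\diss}$ of Remark \ref{SpaceDecomp}. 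Finally, $(\calE^{\trans})^{\cons}$ is conservative on $L^2(X_{\trans}\setminus X_{\diss},m|_{X_{\trans}\setminus X_{\diss}})$ by construction, and, being the part of the transient form $\calE^{\trans}$ on an invariant subset, it is transient by Proposition \ref{PartDirichlet}; this is the transient-conservative summand. (If uniqueness is also wanted, it follows exactly as in Theorems \ref{Zerlegung1} and \ref{Zerlegung2}, from the maximality of the three invariant sets involved and the ensuing forced identification of the three forms.)

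The main obstacle, such as it is, will be the bookkeeping for the ``part of a part'': verifying that the conservative and dissipative spaces of $\calE^{\trans}$ are precisely $X_{\trans}\cap X_{\cons}$ and $X_{\diss}$, and hence that $(\calE^{\trans})^{\diss}$ coincides with $\calE^{\diss}$ rather than being merely a restriction of it. This rests on the identity $T_t^{\trans}1_{X_{\trans}}=1_{X_{\trans}}T_t1$ together with the inclusion $X_{\diss}\subset X_{\trans}$ from Lemma \ref{inclusion}. All remaining steps are direct applications of Lemma \ref{PartOfE}, Remark \ref{SpaceDecomp}, Theorems \ref{Zerlegung1}--\ref{Zerlegung2}, and Proposition \ref{PartDirichlet}.
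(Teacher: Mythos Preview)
Your proposal is correct and follows essentially the same route as the paper: both split $\calE$ first via Theorem~\ref{Zerlegung1} into $\calE^{\rec}+\calE^{\trans}$, and then split $\calE^{\trans}$ according to the invariant partition $X_{\trans}=X_{\diss}\cup(X_{\trans}\setminus X_{\diss})$ coming from Lemma~\ref{inclusion}. The only cosmetic difference is that the paper invokes Lemma~\ref{PartOfE} directly on the invariant set $X_{tc}:=X_{\trans}\setminus X_{\diss}$ to obtain $\calE^{\trans}=\calE^{\diss}+\calE^{X_{tc}}$ in one line, whereas you re-apply Theorem~\ref{Zerlegung2} to $\calE^{\trans}$ and then identify its conservative and dissipative spaces with $X_{\trans}\cap X_{\cons}$ and $X_{\diss}$; your extra bookkeeping (via $T_t^{\trans}1_{X_{\trans}}=1_{X_{\trans}}T_t1$ and Proposition~\ref{PartDirichlet}) in fact spells out what the paper leaves implicit when it asserts that $\calE^{X_{tc}}$ is transient and conservative.
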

\begin{proof}
According to Lemma \ref{inclusion} $X_{\diss}$ is $T_t$-invariant. Hence the set $X_{tc}:=X_{\trans}\setminus X_{\diss}$ is $T_t$-invariant as well. Regarding the inclusion $ X_{\diss}\subset X_{\trans}$ we derive
\[
\calE^{\trans}[u] = \calE^{\diss}[u] + \calE^{X_{tc}} [u],\ \forall\,u\in\calD.
\]
From the very definition of $X_{tc}$ we learn that $\calE^{X_{tc}}$ is transient-recurrent in $L^2(X_{tc},m|_{X_{tc}})$. Applying Theorem \ref{Zerlegung1}, we get the result.
\end{proof}

Relying on Theorem \ref{Zerlegung3} we immediately derive the following.
\begin{coro}
Assume that $T_t$ is irreducible. Then either $\calE$ is recurrent or dissipative or transient-conservative.
\end{coro}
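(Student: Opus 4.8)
The plan is to read off the trichotomy directly from the $m$-a.e.\ decomposition $X = X_{\rec} \cup (X_{\trans}\setminus X_{\diss}) \cup X_{\diss}$ of Remark \ref{SpaceDecomp} (which underlies Theorem \ref{Zerlegung3}) combined with irreducibility. First I would record that all three pieces are $T_t$-invariant: $X_{\rec}$ and $X_{\trans}$ are invariant by the discussion preceding Theorem \ref{Zerlegung1}, $X_{\cons}$ and $X_{\diss}$ are invariant by Lemma \ref{inclusion}, and hence $X_{tc}:=X_{\trans}\setminus X_{\diss}=X_{\trans}\cap X_{\cons}$ is invariant as well, being an intersection of invariant sets. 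Moreover, by Remark \ref{SpaceDecomp}, these three sets partition $X$ up to $m$-null sets.

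Next I would invoke irreducibility of $T_t$, which by definition forces every $T_t$-invariant set to be either $m$-null or co-null. Applying this to $X_{\rec}$, $X_{tc}$ and $X_{\diss}$ and using that they are $m$-a.e.\ pairwise disjoint with $m$-a.e.\ union $X$, exactly one of them must be co-null while the other two are $m$-null. This bookkeeping step is the crux of the proof; the rest is translation back to properties of $\calE$.

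Finally I would treat the three cases. If $m(X\setminus X_{\rec})=0$, then $X=X_{\rec}$ $m$-a.e.\ and by the Remark following Theorem \ref{Zerlegung1} we get $\calE=\calE^{\rec}$, i.e.\ $\calE$ is recurrent. If $m(X\setminus X_{\diss})=0$, then $X=X_{\diss}$ $m$-a.e.\ and by Proposition \ref{PropCons}(a) we get $\calE=\calE^{\diss}$, i.e.\ $\calE$ is dissipative. If $m(X\setminus X_{tc})=0$, then simultaneously $m(X\setminus X_{\trans})=0$, whence $\calE=\calE^{\trans}$ is transient by the Remark following Theorem \ref{Zerlegung1}, and $m(X_{\diss})=0$, whence $X=X_{\cons}$ $m$-a.e.\ (since $X_{\diss}=X\setminus X_{\cons}$) and $\calE=\calE^{\cons}$ is conservative by Proposition \ref{PropCons}(a); that is, $\calE$ is transient-conservative. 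These cases being exhaustive, the corollary follows. The only point demanding a little care is the passage from the set-level identities $X=X_{\trans}$ (resp.\ $X=X_{\cons}$) $m$-a.e.\ to the form-level identities $\calE=\calE^{\trans}$ (resp.\ $\calE=\calE^{\cons}$), but this is exactly what the cited Remark and Proposition deliver, so no serious obstacle is expected.
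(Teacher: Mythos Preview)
Your argument is correct and is precisely the unpacking of what the paper means by ``Relying on Theorem \ref{Zerlegung3} we immediately derive the following'': you use the invariant $m$-a.e.\ partition $X=X_{\rec}\cup X_{tc}\cup X_{\diss}$ from Remark \ref{SpaceDecomp}, apply irreducibility to force exactly one piece to be co-null, and read off the corresponding global property via the Remark after Theorem \ref{Zerlegung1} and Proposition \ref{PropCons}(a). This is the same approach as the paper's, just with the details made explicit.
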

\begin{prop}
Assume that $\calE$ is conservative. If $m(X_{\trans})>0$ then $\calE^{\trans}$ is conservative as well.
\end{prop}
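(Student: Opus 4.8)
The plan is to argue entirely at the level of semigroups, exploiting that $X_{\trans}$ is invariant. First I would recall that, by the discussion preceding Theorem~\ref{Zerlegung1} (following \cite[Lemma 1.6.2]{Fukushima}), the set $X_{\trans}$ is $T_t$-invariant; together with the hypothesis $m(X_{\trans})>0$ this makes $\calE^{\trans}=\calE^{X_{\trans}}$ a genuine Dirichlet form in $L^2(X_{\trans},m|_{X_{\trans}})$ in the sense of Lemma~\ref{PartOfE}, with an associated Markovian semigroup $T_t^{\trans}$, extended to $L^\infty(X_{\trans},m|_{X_{\trans}})$ by the procedure of Section~2.

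Next I would invoke, exactly as in the proof of Theorem~\ref{Zerlegung2} (cf. (\ref{conssg})) and in the characterizations of invariance recalled in Section~3, the two identities
\[
T_t^{\trans}\, 1_{X_{\trans}} = 1_{X_{\trans}}\, T_t\, 1_{X_{\trans}}\quad\text{and}\quad T_t\, 1_{X_{\trans}} = 1_{X_{\trans}}\, T_t\, 1,\qquad t>0,
\]
valid on $L^\infty$, where $1$ denotes the constant function on $X$, while $1_{X_{\trans}}$, read inside $L^2(X_{\trans},m|_{X_{\trans}})$, is the constant function $1$. Now the hypothesis that $\calE$ is conservative, i.e. $T_t 1 = 1$ for every $t>0$, plugged into the second identity gives $T_t 1_{X_{\trans}} = 1_{X_{\trans}}$; substituting this into the first identity yields $T_t^{\trans} 1_{X_{\trans}} = 1_{X_{\trans}}\cdot 1_{X_{\trans}} = 1_{X_{\trans}}$ for every $t>0$. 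Interpreted in $L^2(X_{\trans},m|_{X_{\trans}})$ this says precisely $T_t^{\trans} 1 = 1$ for every $t>0$, which is conservativeness of $\calE^{\trans}$.

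Since the proof is a short chain of identities, there is no serious obstacle; the only point deserving care is the bookkeeping of the two meanings of the symbol $1$ (the constant on $X$ versus the constant on $X_{\trans}$) and the verification that the $L^\infty$-extension of $T_t^{\trans}$ used here is the one constructed in Section~2, so that the identities above are legitimately available on $1_{X_{\trans}}\in L^\infty$ and not merely on $L^2$. I would also note in passing that $m(X_{\trans})>0$ is needed only to keep the statement non-vacuous, and that combining this proposition with Theorem~\ref{Zerlegung3} identifies, for a conservative $\calE$, the transient part $\calE^{\trans}$ with the transient-conservative part $\calE^{X_{tc}}$ (since then $m(X_{\diss})=0$).
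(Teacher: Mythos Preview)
Your proof is correct and rests on the same semigroup identity as the paper's, namely $T_t^{\trans}1_{X_{\trans}}=1_{X_{\trans}}T_t1_{X_{\trans}}$ together with invariance of $X_{\trans}$. The paper reaches $1_{X_{\trans}}T_t1_{X_{\trans}}=1_{X_{\trans}}$ by first splitting into the cases $m(X_{\rec})=0$ and $m(X_{\rec})>0$ and, in the second case, writing $T_t1=1_{X_{\rec}}T_t1_{X_{\rec}}+1_{X_{\trans}}T_t1_{X_{\trans}}$ and using that the recurrent part is conservative; you instead use directly the invariance characterization $T_t1_{X_{\trans}}=1_{X_{\trans}}T_t1$ combined with $T_t1=1$, which is slightly more streamlined and makes the case distinction and the appeal to recurrence unnecessary.
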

\begin{proof}
If $m(X_{\rec})=0$, then $\calE=\calE^{\trans}$ and we are done. If not, the form $\calE^{\rec}$ is nonzero  and is conservative in $L^2(X_\rec,m|_{X_\rec})$. Thus
\begin{align}
T_t1 = 1_{X_\rec}T_t 1_{X_\rec} + 1_{X_\trans}T_t 1_{X_\trans}=1_{X_\rec}+ 1_{X_\trans}T_t 1_{X_\trans}=1,
\end{align}
leading to $1_{X_\trans}T_t 1_{X_\trans}=1_{X_\trans}$. Hence $\calE^{\trans}$ is conservative.
\end{proof}
Yet we turn our attention  to analyze relationship between conservativeness of $\calE$ and its approximating sequences $\calEb$ and $\calEt$.
\begin{theo}
For each $\beta,t>0$, set   $Y_{\cons}^{(\beta)}, Y_{\diss}^{(\beta)},X_{\cons}^{(t)}, X_{\diss}^{(t)}$ respectively the conservative and the dissipative spaces of $\calEb$ and $\calEt$. Then
\begin{enumerate}
\item For every $\beta,t>0$, it holds $X_\cons= X_{\cons}^{(t)}=Y_{\cons}^{(\beta)}$ and hence  $X_\diss= X_\diss^{(t)}=Y_{\diss}^{(\beta)}$.
\item  $\calE$ is conservative if and only if $\calEt$ also is for some and hence every $t>0$, equivalently $\calEb$ is conservative for some and hence every $\beta>0$ . Analogous statement holds true for dissipation.
\end{enumerate}
\end{theo}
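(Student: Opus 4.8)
The plan is to establish part (a) first, since part (b) is then a formal consequence. For a fixed $t,\beta>0$ the conservative spaces are, by definition, $X_\cons^{(t)}=\cap_{s>0}\{T_s^{(t)}1=1\}$ and $Y_\cons^{(\beta)}=\cap_{s>0}\{T_s^{(\beta)}1=1\}$, where (as recorded in the proof of Theorem~\ref{ApproxInvariance}) the approximating semigroups admit the series representations
$T_s^{(t)}=e^{-s/t}\sum_{k\ge0}\frac{(s/t)^k}{k!}T_t^k$ and $T_s^{(\beta)}=e^{-s\beta}\sum_{k\ge0}\frac{(s\beta)^k}{k!}(\beta K_\beta)^k$, converging in $L^\infty$ as well. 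I will combine these with the submarkovianity estimates $0\le T_t^k1\le1$ and $0\le(\beta K_\beta)^k1\le1$ (which hold since $T_t,\beta K_\beta$ are submarkovian and positivity preserving) and with the monotonicity of $s\mapsto T_s1$.

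For the inclusions $X_\cons\subset X_\cons^{(t)}$ and $X_\cons\subset Y_\cons^{(\beta)}$: by Lemma~\ref{inclusion} the set $X_\cons$ is $\calE$-invariant, hence by Theorem~\ref{ApproxInvariance} it is $\calEt$-invariant and $\calEb$-invariant. On $X_\cons$ one has $T_t1=1$ and $\beta K_\beta 1=1$ for every $t,\beta$ by definition of $X_\cons$, so $\calE$-invariance yields $T_t1_{X_\cons}=1_{X_\cons}T_t1=1_{X_\cons}$ and $\beta K_\beta 1_{X_\cons}=1_{X_\cons}$; iterating, $T_t^k1_{X_\cons}=1_{X_\cons}$ and $(\beta K_\beta)^k1_{X_\cons}=1_{X_\cons}$. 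Substituting into the series gives $T_s^{(t)}1_{X_\cons}=1_{X_\cons}$ and $T_s^{(\beta)}1_{X_\cons}=1_{X_\cons}$ for all $s>0$, and applying the $\calEt$- (resp. $\calEb$-) invariance of $X_\cons$ once more turns this into $1_{X_\cons}T_s^{(t)}1=1_{X_\cons}$ (resp. $1_{X_\cons}T_s^{(\beta)}1=1_{X_\cons}$), i.e. $T_s^{(t)}1=1$ and $T_s^{(\beta)}1=1$ on $X_\cons$ for every $s>0$. Hence $X_\cons\subset X_\cons^{(t)}\cap Y_\cons^{(\beta)}$.

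For the reverse inclusions, let $x\in X_\cons^{(t)}$, so $T_s^{(t)}1(x)=1$ for every $s>0$. Since $e^{-s/t}\sum_k\frac{(s/t)^k}{k!}=1$ and $T_t^k1(x)\in[0,1]$, the identity $e^{-s/t}\sum_k\frac{(s/t)^k}{k!}T_t^k1(x)=1$ forces $T_t^k1(x)=1$ for every $k$, that is $T_{kt}1(x)=1$ for all $k\in\N$. Now $s\mapsto T_s1$ is non-increasing (because $T_s1=T_{s-u}(T_u1)\le T_{s-u}1$ whenever $0<u<s$) and bounded above by $1$, so for an arbitrary $s>0$, choosing $k$ with $kt\ge s$ gives $1=T_{kt}1(x)\le T_s1(x)\le1$, whence $T_s1(x)=1$; thus $x\in X_\cons$ and $X_\cons^{(t)}\subset X_\cons$. (Alternatively: $X_\cons^{(t)}$ is $\calEt$-invariant by Lemma~\ref{inclusion} applied to $\calEt$, hence $\calE$-invariant by Theorem~\ref{ApproxInvariance}, the part $\calE^{X_\cons^{(t)}}$ is conservative at time $t$, and conservativeness of a symmetric Markovian semigroup at one time propagates to all times.) For $\calEb$, $x\in Y_\cons^{(\beta)}$ forces in the same way $(\beta K_\beta)^k1(x)=1$ for all $k$; in particular, using the Laplace representation $K_\beta 1=\int_0^\infty e^{-\beta u}T_u1\,du$ (extended to $L^\infty$ by monotone convergence), $\frac1\beta=K_\beta 1(x)=\int_0^\infty e^{-\beta u}T_u1(x)\,du\le\frac1\beta$ forces $T_u1(x)=1$ for a.e. $u>0$, and the monotonicity of $u\mapsto T_u1(x)$ upgrades this to all $u>0$, so $x\in X_\cons$. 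This proves $X_\cons=X_\cons^{(t)}=Y_\cons^{(\beta)}$, the equalities for the dissipative spaces follow by complementation, and in particular $X_\diss^{(t)}$ and $Y_\diss^{(\beta)}$ are independent of $t$ and $\beta$.

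Finally, part (b) follows formally: a Dirichlet form is conservative if and only if its dissipative space is $m$-null and dissipative if and only if that space has positive $m$-measure; applying this to $\calEt$ and to $\calEb$ and using $X_\diss^{(t)}=X_\diss=Y_\diss^{(\beta)}$ from part (a), $\calEt$ (resp. $\calEb$) is conservative iff $m(X_\diss)=0$ and dissipative iff $m(X_\diss)>0$, i.e. iff $\calE$ is conservative (resp. dissipative); as these conditions do not involve $t$ or $\beta$, ``for some'' and ``for every'' coincide. I expect the main obstacle to be the inclusion $X_\cons^{(t)}\subset X_\cons$ (and its $\calEb$-analogue): the series only delivers conservativeness along the discrete time scale $kt$ (resp. at the level of the resolvent), and this must be promoted to conservativeness at all times, which is exactly where the monotonicity of $s\mapsto T_s1$ — or, equivalently, the ``one time implies all times'' principle together with Theorem~\ref{ApproxInvariance} — is needed.
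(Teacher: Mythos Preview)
Your proof is correct. The inclusion $X_\cons\subset X_\cons^{(t)}$ (and its $\beta$-analogue) is handled exactly as in the paper: invariance of $X_\cons$ passes to $\calEt$ and $\calEb$ via Theorem~\ref{ApproxInvariance}, and the exponential series carries $T_t^k1_{X_\cons}=1_{X_\cons}$ over to $T_s^{(t)}1_{X_\cons}=1_{X_\cons}$.

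For the reverse inclusion the two arguments diverge. The paper appeals to the representation formula of Lemma~\ref{KernelRep}: conservativeness of $(\calEt)^{X_\cons^{(t)}}$ is used (via a spectral-bottom statement) to force the killing term $\frac{1}{t}(1-T_t1)$ to vanish on $X_\cons^{(t)}$, whence $m(X_\diss\cap X_\cons^{(t)})=0$. Your route is more elementary and entirely self-contained: you read the identity $e^{-s/t}\sum_k\frac{(s/t)^k}{k!}T_t^k1=1$ as a convex combination of numbers in $[0,1]$, which pins down $T_{kt}1=1$ on $X_\cons^{(t)}$ for every $k$, and then the monotonicity $s\mapsto T_s1$ (equivalently, formula~(\ref{reecriture})) fills in all remaining times; the $\beta$-case is handled analogously through the Laplace representation $K_\beta1=\int_0^\infty e^{-\beta u}T_u1\,du$. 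Your approach trades the representation formula for the monotonicity of $T_s1$, avoids the somewhat opaque spectral step, and treats the $\calEb$ case with a separate transparent argument where the paper simply says it ``follows exactly the lines of the preceding one''. Both routes are valid; yours is arguably cleaner.
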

\begin{proof}
Obviously assertion (b) is a direct consequence of assertion (a).\\
$X_\cons\subset X_{\cons}^{(t)}$: Following the extension of $T_t$ on $L^\infty$, we get that the extension of  $T_s^{(t)}$ from $L^2\cap L^\infty$ to $L^\infty$ is given via
\begin{align}
T_s^{(t)}u=e^{-s/t}\sum_{k=0}^\infty \frac{1}{k!}(s/t)^k T_t^k u , \forall\,s,t>0,\ u\in L^\infty.
\end{align}
Hence, as $X_{\cons}$ is $\calEt$-invariant we obtain
\[
1_{X_{\cons}}T_s^{(t)} 1= T_s^{(t)} 1_{X_{\cons}}=\exp(-s/t(1-T_t))1_{X_{\cons}} =e^{-s/t}\sum_{k=0}^\infty \frac{1}{k!}(s/t)^k T_t^k 1_{X_{\cons}} , \forall\,s,t>0.
\]
Induction on $k$ leads to
\[
T_t^k( 1_{X_{\cons}}  )= T_t^{k-1}(1_{X_{\cons}})=1_{X_{\cons}}T_t^k 1=1_{X_{\cons}},\ \forall\,t>0,\ k\in\N.
\]
Thus $1_{X_{\cons}}T_s^{(t)} 1 =1_{X_{\cons}}$ leading to the inclusion $X_\cons\subset X_{\cons}^{(t)}$.\\
$X_{\cons}^{(t)}\subset X_\cons$: Since $\calEt_\cons$ is conservative as a Dirichlet form in $L^2(X_\cons,m|_{X_\cons})$, then the lower bound of its the spectrum is $0$. Hence from the  representation of $\calEt$ (see Lemma \ref{KernelRep}), we infer that
\[
\int_{X_\diss\cap X_{\cons}^{(t)}} u^2\,dm=0,\ \forall\,u\in L^2.
\]
Thus $ m( X_\diss\cap X_{\cons}^{(t)})=0$ and  $X_{\cons}^{(t)}\subset X_\cons$. Now both inclusions lead to $X_{\cons}^{(t)}=X_\cons$.\\
The proof of the claim $X_\cons=Y_\cons^{(\beta)}$ follows exactly the lines of the preceding one, so we omit it.

\end{proof}
Combining the latter theorem with Theorem \ref{InvarianceMosco}-(b) we obtain:
\begin{coro}
\begin{enumerate}
\item The conservative part of $\calE$ is the Mosco limit of the conservative part of $\calEt$ whereas its dissipative part is the Mosco limit of the dissipative part of $\calEt$.
\item The conservative part of $\calE$ is the Mosco limit of the conservative part of $\calEb$ whereas its dissipative part is the Mosco limit of the dissipative part of $\calEb$.
\end{enumerate}
\end{coro}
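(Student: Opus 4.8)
The plan is simply to combine the two inputs quoted just before the statement: the fact, established in the preceding theorem, that $X_{\cons}^{(t)}=Y_{\cons}^{(\beta)}=X_{\cons}$ and $X_{\diss}^{(t)}=Y_{\diss}^{(\beta)}=X_{\diss}$ for all $t,\beta>0$, and the stability of parts on invariant sets under Mosco convergence, Theorem \ref{InvarianceMosco}-(b). Recall also from Section 2 that $\calEt$ converges to $\calE$ in the sense of Mosco as $t\downarrow 0$ and that $\calEb$ converges to $\calE$ in the sense of Mosco as $\beta\uparrow\infty$; since it suffices to argue along an arbitrary sequence $t_n\downarrow 0$ (resp. $\beta_n\uparrow\infty$), we place ourselves in the sequential setting required by Theorem \ref{InvarianceMosco}.

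First I would prove (a). By the preceding theorem, $X_{\cons}$ is the conservative space of $\calEt$ for every $t>0$; in particular $X_{\cons}$ is $\calEt$-invariant for every $t$ (Theorem \ref{ApproxInvariance}). Applying Theorem \ref{InvarianceMosco}-(b) to the sequence $(\calE^{(t_n)})_n$ and the invariant set $Y=X_{\cons}$, we obtain that $(\calE^{(t_n)})^{X_{\cons}}$ converges to $\calE^{X_{\cons}}$ in the sense of Mosco. Now, by the definition of the conservative part (see the proof of Theorem \ref{Zerlegung2}) together with the identity $X_{\cons}^{(t_n)}=X_{\cons}$, the form $(\calE^{(t_n)})^{X_{\cons}}$ is exactly the conservative part of $\calE^{(t_n)}$, while $\calE^{X_{\cons}}=\calE^{\cons}$ is the conservative part of $\calE$. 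As the sequence was arbitrary, the conservative part of $\calEt$ Mosco-converges to $\calE^{\cons}$. Choosing instead $Y=X_{\diss}$ (invariant by Lemma \ref{inclusion}) and using $X_{\diss}^{(t)}=X_{\diss}$ yields in the same way that the dissipative part of $\calEt$ Mosco-converges to $\calE^{\diss}$, which completes (a). Assertion (b) is obtained by repeating the argument verbatim with the sequence $(\calE^{(\beta_n)})_n$, $\beta_n\uparrow\infty$, in place of $(\calE^{(t_n)})_n$, using the equalities $Y_{\cons}^{(\beta)}=X_{\cons}$ and $Y_{\diss}^{(\beta)}=X_{\diss}$ and the Mosco convergence $\calEb\to\calE$.

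There is no genuinely hard step left, since all the analysis was carried out earlier. The one point deserving attention is the bookkeeping needed to invoke Theorem \ref{InvarianceMosco}: one must know that a \emph{single} set, namely $X_{\cons}$ (resp. $X_{\diss}$), is simultaneously invariant with respect to $\calE$ \emph{and} every approximating form, which is precisely what the preceding theorem (equivalently Theorem \ref{ApproxInvariance}) guarantees; without this the trace forms $(\calEt)^{X_{\cons}}$ and $(\calEb)^{X_{\cons}}$ could not be identified with the conservative parts of $\calEt$ and $\calEb$, and the conclusion would fail.
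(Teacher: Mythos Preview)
Your proposal is correct and follows exactly the approach the paper takes: the corollary is stated immediately after the theorem with only the sentence ``Combining the latter theorem with Theorem \ref{InvarianceMosco}-(b) we obtain:'' and no further proof. Your write-up merely spells out this combination in detail, including the passage to sequences $t_n\downarrow 0$, $\beta_n\uparrow\infty$ needed to apply Theorem \ref{InvarianceMosco}.
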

\begin{rk}{\rm
Let us give a final remark concerning the solution of the heat equation. Let $L$ be the positive selfadjoint operator related to $\calE$ and $u\in L^2$. Then the solution of the heat equation
\begin{eqnarray}
\label{heat}
\left\{\begin{gathered}
-\frac{\partial v}{\partial t}=Lv\\
v(0,\cdot)= u
\end{gathered}
\right.
\end{eqnarray}
is given by $u(t):=T_tu,\ t\geq 0$ (set $T_0=1$).\\
We recall that a positive function $u\in L^2$ is called excessive if $T_tu\leq u$. It is well known that $T_t$ is conservative if and only if for every excessive function $u$ it holds $T_tu=u$. Thus For excessive initial data $u$ we have
\[
u(t)=T_tu= 1_{X_\cons}u + 1_{X_\diss}T_tu.
\]
In other words, for excessive initial date, the solution of the heat equation is the sum of an autonomous function and a time dependent one. Moreover, though the $L^2$-norm of the solution $u(t)$ decreases, the latter identity shows that $\|u(t)\|_{L^2}$ is lower semi-bounded by $\|u\|_{L^2(X_\cons)}$. More strongly, if $m(X_\cons)>0$ and $u$ is non negative then
\[
\liminf_{t\to\infty} u(t)\geq 1_{X_\cons}u>0.
\]

}
\end{rk}

\section{Examples}
\subsection{Parts of a one-dimensional diffusion on intervals}
Consider $I_1:=[-2,-1]$, $I_2:=[0,\infty)$, $X = I_1 \cup I_2$. Let  $m$ be the measure with full support $X$ defined by $d\,m(x)=2x^2 {\mathrm d}x$  and $s$ be the function
\[
s:I_1\cup (0,\infty)\to\R,\ x\mapsto -\frac{1}{x}
\]
Let us denote by $AC_s(X)$ the space of $s$-absolutely continuous functions on $X$. Set
\[
\calD_0:=\Bigl\{u\from X\to\R:\; u\in AC_s(X),\ \int_X (u'(x))^2 x^2\,{\mathrm d}x<\infty\Bigr\},
\]
We define  $\calE$ the Dirichlet form in $L^2(X, m)$ by
\begin{align*}
  \calD :=\calD_0\cap L^2(X,m),\ \calE[u]:= \int_X (u'(x))^2 x^2\,{\mathrm d}x \quad\text{for all }u\in \calD.
\end{align*}
Let us prove that $I_1$ is an invariant set w.r.t. $\calE$. To that end let $u\in\calD$. Obviously ${1}_{I_1}u, {1}_{I_2}u\in\calD$ and $\calE[u] = \calE[{1}_{I_1}u] + \calE[{1}_{I_2}u]$. Hence $I_1$ is invariant and so is $I_2$.
Let $\calE^{I_i},\ i=1,2$ be the forms defined by t
\[
 \calD^{I_2} := dom(\calE^{I_2})= \calD, \ \ \calE^{I_2}[u] = \calE[{1}_{I_2}u],
 \]
and
\[
\calD^{I_1} := dom(\calE^{I_1})= \calD, \ \ \calE^{I_1}[u] = \calE[{1}_{I_1}u].
\]
Both forms are Dirichlet forms in the wide sense and are in fact Dirichlet forms on the respective spaces $L^2(I_i,m|_{I_i})$. Moreover $\calE$ decomposes into the sum of both forms. Let us now characterize the parts of $\calE$.\\
We claim that $\calE^{I_1}$ is a recurrent Dirichlet form in $L^2 (I_1, m|I_1)$. Indeed, both endpoints $-2$ and $-1$ are regular, i.e. $s(-2) = \frac{1}{2}< \infty $ , $s(-1) = 1 < \infty$ and $m((-2,c))<\infty $,  $m((c,-1))<\infty $ for all $c\in I_1$. Thereby and according to Feller's classification of one-dimensional diffusions (see  \cite[Prop.2.2.8, p.66]{Chen-Fukushima}) we conclude that $\calE^{I_1}$ is recurrent in $L^2 (I_1, m|I_1)$. Thus it is the recurrent part of $\calE$.\\
Now we claim that $\calE^{I_2}$ is  transient and conservative in $L^2(I_2, m|_{I_2})$. Indeed, owing to the fact that $0$ is non-approachable, i.e. $s(0) = -\infty$ while $\infty$ is approachable and a non-regular endpoint, i.e. $s(\infty) = 0<\infty$  and $m((c,\infty))= \infty$ for all $c>0$. Hence, owing to \cite[Proposition 2.2.11, p.68]{Chen-Fukushima}, the form $\calE^{I_2}$ is transient. On the other hand an elementary computation yields
\[
\int_{0}^c m((x,c))\,ds(x) = \int_c^\infty m((c,x))\,ds(x) =\infty, \forall c>0.
\]
Thereby $\calE^{I_2}$ fulfills Feller's test of non-explosion (see \cite[p. 126]{Chen-Fukushima} and then it is conservative. Summarizing, we obtain  $\calE^{\rec} = \calE^{I_1}$ while $ \calE^{\trans} = \calE^{I_2}$ and $\calE^{\cons} = \calE$.

\subsection{Parts of the trace of the Bessel process}
In this example we choose the function $s$ as before. Let $I:= [0,1]$ , $J:= (a_k)_{k\geq 2}$ with $a_k>0$ for all integers $k$, and $X:= I\cup J$. Let us consider a discrete measure $\mu:= \sum_{k\geq 2} a_k \delta_k$ and the measure $m:= {1}_{I} x^2 {\mathrm d}x + \mu $. We assume that $\mu$ is infinite. We consider the trace of the Bessel process with respect to the measure $m$ (see \cite{BM}) defined by
\begin{align*}
& \check{\calD} := \{ u\in L^2(X, m) : u\in AC_s([0,1]),  \int_0^1 (u'(x))^2 x^2 dx + \sum_{k=2}^\infty k(k+1) (u_{k+1} - u_k)^2 <\infty \}\\
&\ccalE[u] := \int_0^1 (u'(x))^2 x^2 dx + \sum_{k=2}^\infty k(k+1) (u_{k+1} - u_k)^2, \ \text{for all }\ u\in \check{\calD}.
\end{align*}
Following the proof of \cite[Theorem 3.6]{BM}  one can show that $\ccalE$ is transient (in fact it is the trace of a transient Dirichlet form and hence it is transient). Thus $\ccalE^{\trans}=\ccalE$.\\
It is not hard to realize that the sets $I,J$ are invariant. We define the Dirichlet forms $\ccalE^{I}$ and $\ccalE^{J}$ in $L^2(I, m|_{I})$ and $L^2(J, m|_{J})$ respectively by
\[
\dom(\ccalE^{I}) = \ccalD,\ \ccalE^{I}[u] =  \calE[{1}_{ I}u]\ \text{and}\
\dom(\ccalE^{J}) = \ccalD, \ \ccalE^{J}[u] =  \calE[{1}_{ J}u].
\]
Obviously $\ccalE[u] = \ccalE[{1}_{I}u] + \ccalE[{1}_{J}u]$ for each $u\in\ccalD$. On the one hand  a straightforward computation leads to
\[
\int_0^c m((x,c)) {\mathrm d}s(x) =\infty , \ \text{and}\ \int_c^1 m((c,x)) {\mathrm d}s(x) < \infty, \ \forall c\in I.
\]
Hence according to Feller's test once again, we conclude that $\ccalE^{I}$ is dissipative. On the other one, according to \cite[Theorem 3.7]{BM} the discrete part of the form, namely $\ccalE^J$ is conservative if and only if
\begin{eqnarray}
\sum_{k=2}^\infty \frac{a_k}{k} = \infty.
\label{conservDiscrte}
\end{eqnarray}
In conclusion if condition (\ref{conservDiscrte}) is fulfilled then $\ccalE^\diss=\ccalE^I$ while  $\ccalE^\cons=\ccalE^J$. However, if (\ref{conservDiscrte}) is not fulfilled then $\ccalE=\ccalE^\diss=\ccalE^\trans$.

\bibliography{BiblioDecomposition}

\end{document}